\documentclass[11pt]{amsart}
\usepackage{amssymb, adjustbox, enumerate, amsbsy, stmaryrd}
\usepackage{geometry}
\usepackage[T1]{fontenc}
\usepackage{amsfonts, amssymb, amscd}
\numberwithin{equation}{section}

\usepackage[symbol]{footmisc}

\usepackage{bm}
\usepackage{verbatim}
\usepackage{mathrsfs}
\usepackage{graphicx}
\usepackage{tikz-cd}
\usepackage{subcaption}
\usepackage{listings}
\usepackage{subfiles}
\usepackage[toc,page]{appendix}
\usepackage{mathtools}
\usepackage{comment}
\usepackage{enumerate}
\usepackage{enumitem}
\usepackage[all]{xy}

\usepackage{booktabs,array,enumitem}
\graphicspath{{images/}}

\usepackage{appendix}
\usepackage{hyperref}
\hypersetup{
    colorlinks=true,
    citecolor=red,
    linkcolor=blue,
    filecolor=magenta,      
    urlcolor=red,
}
\newcommand{\cone}[1]{\langle #1\rangle}

\newcommand{\Spec}{\mathrm{Spec}}

\newcommand{\Cc}{\mathbb{C}}

\newcommand{\Pp}{\mathbb{P}}
\newcommand{\Qq}{\mathbb{Q}}

\newcommand{\Rr}{\mathbb{R}}

\newcommand{\Zz}{\mathbb{Z}}

\newcommand{\Exc}{\operatorname{Exc}}

\newcommand{\Hom}{\operatorname{Hom}}

\newcommand{\mult}{\operatorname{mult}}

\newcommand{\Oo}{\mathcal{O}}

\newtheorem{thm}{Theorem}[section]

\newtheorem{lem}[thm]{Lemma}
\newtheorem{prop}[thm]{Proposition}

\theoremstyle{definition}
\newtheorem{defn}[thm]{Definition}

\theoremstyle{definition}

\newtheorem{ex}[thm]{Example}

\theoremstyle{definition}

\begin{document}

\tikzset{fanedge/.style={line width=.35pt},fandot/.style={circle,fill=black,inner sep=.65pt},
 fanlabel/.style={font=\scriptsize,fill=white,inner sep=.6pt},
 paneltitle/.style={font=\scriptsize,align=center}}

\title[Birational morphism between threefolds]{Decompositions of birational toric morphism between threefolds}
\author{Lingyao Xie}

\address{School of Mathematical Sciences, Zhejiang University, Hangzhou, China}
\email{xielingyao@zju.edu.cn}

\begin{abstract}
Let $f:X\to Y$ be a birational morphism between smooth projective threefolds, Koll\'ar asked that if $f$ could be decomposed into smooth blow-downs and smooth flops. In this paper we show that the toric version is true.
\end{abstract}


\maketitle
\tableofcontents

\section{Introduction}
    
We work over $\mathbb{C}$. Let $f: X\to Y$ be a birational morphism between two smooth surfaces, then the Castelnuovo's contraction theorem tells us that $f$ can be decomposed into $X=X_0\to X_1\to\cdots\to X_n=Y$, where each $X_i\to X_{i+1}$ is a smooth blow-down of a $(-1)$-curve $E_i\simeq\Pp^1$. As a threefold analogue, Koll\'ar has asked in \cite[Question 4]{Kol08}:


\vspace{.5em}
{\it Let $f: X\to Y$ be a birational morphism between smooth projective threefolds, can $f$ be writen as a composite of smooth blow-downs and flops?}
\vspace{.1em}

\noindent where he also indicates the necessity of flops: Let $C\subset\Pp^3$ be an irreducible curve with a unique singular point $q$ which is either a node or a cusp, and $p\in {\rm Bl}_{C}\Pp^3$ be the (unique) singular point. Then ${\rm Bl}_p{\rm Bl}_C\Pp^3$ is smooth but unlike ${\rm Bl}_{C'}{\rm Bl}_q\Pp^3$ (where $C'$ is the birational transform of $C$),
\[
{\rm Bl}_p{\rm Bl}_C\Pp^3\to \Pp^3
\]
cannot be written as a composite of smooth blow-ups. In some sense, one needs a new elementary operation to describe the small birational
\[
{\rm Bl}_p{\rm Bl}_C\Pp^3\dasharrow {\rm Bl}_{C'}{\rm Bl}_q\Pp^3
\]
In general the concept of flops could be subtle, but for a (smooth) threefold ($K_X$) flop $f: X\dasharrow X'$, if we let $f: X\to Z$ and $f': X'\to Z$ be the small flopping contractions, we have
\begin{itemize}
    \item $X'$ is also smooth, and
    \item $\Exc(f)$ and $\Exc(f')$ are unions of smooth rational curves.
\end{itemize}
By a local decomposition argument, one may further require the flop to be simple in the sense that both $f$ and $f'$ only contract a single $\Pp^1$ (with the sacrifice that $X'$ might not be projective). For example, the above ${\rm Bl}_p{\rm Bl}_C\Pp^3\dasharrow {\rm Bl}_{C'}{\rm Bl}_q\Pp^3$ is a simple (Atiyah) flop, the normal bundle of the contracted $\Pp^1$ is $\Oo_{\Pp^1}(-1)^{\oplus 2}$.

The question is whether the flop operation, along with smooth blow-downs, would be enough to generalize Castelnuovo's theorem to threefolds. When $\rho(X/Y)=1$, this is almost trivial, and Chen Jiang settled the $\rho(X/Y)=2$ case in \cite{Jia25}. In general the question is still highly open and all the existing examples seem to support this statement. 

A natural effort is to test the question in the toric category: the blow-down and flop operations are extremely easy to describe using the toric language (see \ref{subsec: toric blow-down and flop}) and one might expect to prove a purely combinatorial argument. This still turns out to be difficult and complicated, actually Example \ref{ex: counter-ex when not proj} shows that the clean combinatorial statement is false in general for proper toric varieties/morphisms. However, we manage to prove the toric version with an MMP guideline from the algebraic geometry, whose existence is guaranteed by the projectiveness. 

\begin{thm}\label{thm: decomp bir morph btw toric 3-folds}
Let $f: X\to Y$ be a projective, birational, and toric morphism between (quasi) projective toric (smooth) threefolds, then we can write $f$ as
\[
X=X_0\dasharrow X_1\dasharrow\cdots\dasharrow X_n=Y
\]
such that $X_i\dasharrow X_{i+1}$ is one of the following (see \ref{subsec: toric blow-down and flop}):
\begin{enumerate}
    \item a (toric) $K_{X_i}$-flop, or
    \item a smooth (toric) point blow-down, or
    \item a smooth (toric) curve blow-down.
\end{enumerate}
Furthermore, we may ask $X_i\dasharrow Y$ to be projective morphisms.
\end{thm}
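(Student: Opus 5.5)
We argue by induction on $\rho(X/Y)$, the number of extra rays of the fan $\Sigma_X$ relative to $\Sigma_Y$ (equivalently, the number of $f$-exceptional divisors, since $f$ is toric and birational, hence $\Sigma_X$ refines $\Sigma_Y$). Since $X$ is smooth and $K_X$ is not $f$-nef when $f$ is not an isomorphism (negativity lemma: $K_X=f^*K_Y+\sum a_iE_i$ with all $a_i>0$, and $\sum a_iE_i$ is not $f$-nef), we run a toric $K_X$-MMP over $Y$. Projectivity of $f$ guarantees that the relative Mori cone $\overline{NE}(X/Y)$ is polyhedral, spanned by classes of torus-invariant curves, i.e.\ walls of $\Sigma_X$ interior to cones of $\Sigma_Y$. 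Each step is then either a divisorial contraction or a flip; toric threefold flips from a smooth variety do not occur with $K$-negative extremal rays in the smooth case except in the guise we must control. The guiding idea is to replace the MMP by a modified one that stays in the smooth category and uses only the three allowed operations, while every intermediate $X_i\to Y$ remains projective.

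The key local step: take a $K_X$-negative extremal ray $R$ of $\overline{NE}(X/Y)$ spanned by an invariant curve $C=V(\tau)$, $\tau=\langle v_2,v_3\rangle$, with adjacent cones $\langle v_1,v_2,v_3\rangle,\langle v_2,v_3,v_4\rangle$ and wall relation $v_1+v_4=a v_2+b v_3$; then $K_X\cdot C<0$ means $a+b<2$ roughly, and $-K_X\cdot C=2-a-b$. Classify the possibilities: if the contraction of $R$ is divisorial to a point, smoothness forces the divisor to be $\Pp^2$ with normal bundle $\Oo(-1)$ (a smooth point blow-down), or one of the singular-image cases ($\Pp^2$ with $\Oo(-2)$, quadric cone, $\Pp^1\times\Pp^1$ with $\Oo(-1,-1)$); if divisorial to a curve, smoothness of $X$ plus the toric structure forces a $\Pp^1$-bundle over $\Pp^1$ contracted along fibers, giving a smooth curve blow-down iff the image is smooth. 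When the image is singular, or when the ray is small (a flip), we must instead find a different elementary move. Here we use the combinatorics: the singular situations correspond to specific star-subdivisions of a cone of $\Sigma_Y$ (or of an intermediate fan), and we show that one can perform flops (wall swaps with $v_1+v_4=v_2+v_3$, i.e.\ $(-1,-1)$-curves, which keep smoothness and are $K$-trivial) to reach a fan where a smooth blow-down of a divisor is available. Projectivity of the new $X_i\to Y$ is checked by exhibiting a relatively ample piecewise-linear function: perturb the old one by the support function of the exceptional divisor being removed.

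For the general induction I would choose a specific exceptional divisor to remove, e.g.\ one $E=V(v)$ with maximal discrepancy $a(E,Y)$ (or a ray $v$ that is ``last'' in the sense of being a vertex of the relative polytope of some $f$-ample divisor), and show that after finitely many flops over $Y$ the star of $v$ becomes either the star of a smooth point blow-up (three neighbours forming a basis with $v$ their sum) or a smooth curve blow-up (link of $v$ a cycle in which two opposite neighbours $u,w$ satisfy $v=u+w$ with $\langle u,w\rangle$ a cone after contraction). The flops are chosen to decrease the number of neighbours of $v$ (the degree of $v$ in the link), a toric analogue of running a $(K_X+\epsilon E)$-MMP over $Y$: $E$ is not nef over $Y$, so $(K_X+\epsilon E)$-negative rays meeting $E$ exist, and $K_X$-triviality of the chosen ones makes them flops.

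The main obstacle is exactly this last step: guaranteeing that the $K_X+\epsilon E$-negative rays we encounter are always $K_X$-trivial $(-1,-1)$-curves (genuine smooth flops) or contractions of $E$ itself, and never flips or contractions to singular points/curves, while preserving projectivity over $Y$. Example \ref{ex: counter-ex when not proj} shows purely combinatorial degree-reduction can fail, so the MMP with scaling by an $f$-ample divisor is essential; I would first try to prove, using the wall relation and the maximality of the discrepancy of $E$, that every extremal curve in $E$ has $a,b\in\{0,1,\ldots\}$ with $a+b\ge 2$ only when $a=b=1$, forcing the flop case.
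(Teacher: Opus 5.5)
\textbf{There is a genuine gap, and it sits exactly at the step you flag as the main obstacle.} That step carries the whole theorem. Your induction on $\rho(X/Y)$, the termination of the flop sequences and the projectivity bookkeeping all depend on one claim: for an exceptional $E$ of maximal discrepancy, every $(K_X+\epsilon E)$-negative extremal ray over $Y$ is either an Atiyah flop or a smooth blow-down of $E$. You give no argument for this claim, and as formulated it is false.

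A counterexample:
\begin{itemize}
\item Take $Y=\mathbb{A}^3$ with regular cone $ABC$, and set $W=A+B+2C$, $V=A+B+C$.
\item Let $X$ be given by the regular cones $ABV, AVW, BVW, ACW, BCW$. This is the $(1,1,2)$-weighted blow-up followed by its canonical resolution, so $X$ is smooth and projective over $Y$.
\item The discrepancies are $a(E_W,Y)=3>a(E_V,Y)=2$, so $E=E_W$.
\item Since $2V=A+B+W$, we have $E_V\cong\Pp^2$ with normal bundle $\Oo(-2)$.
\item A line $\ell\subset E_V$ satisfies $K_X\cdot\ell=-1$ and $E_W\cdot\ell=1$.
\end{itemize}
So $\ell$ spans a $(K_X+\epsilon E_W)$-negative extremal ray. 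Its contraction sends $E_V$ to a $\frac12(1,1,1)$ point, which is neither a flop nor a contraction of $E_W$. This ray is even represented by a curve in $E_W$, namely $V(\langle V,W\rangle)$, with normal bundle $\Oo(-2)\oplus\Oo(1)$. So restricting attention to extremal curves in $E$ does not rescue the claim.

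In this small example you could simply contract the other ray, but nothing in your plan controls that choice in general. The phenomenon is structural: a $K$-negative MMP from a smooth toric threefold can leave the smooth category, contracting such $\Pp^2$'s to $\frac12(1,1,1)$ points and then performing terminal flips. An a priori argument keeping every intermediate model smooth should therefore not be expected.

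\textbf{The missing idea is the one the paper uses: let the MMP itself be singular and work with canonical resolutions.}
\begin{itemize}
\item Run the honest toric $K_X$-MMP over $Y$; its models are terminal and $\mathbb{Q}$-factorial.
\item Classify its steps: smooth curve blow-downs; insertion of the mld-ray over a singular point; $(a,b,1)$-weighted blow-ups of smooth points; and flips of types $\mathrm{I}(a,b)$ and $\mathrm{II}(r,a)$ from \cite{FSTU09}.
\item Replace every model by its canonical resolution, obtained by repeatedly inserting mld-rays. This resolution is independent of choices and compatible with restriction to subfans.
\end{itemize}
The key lemma is that each lifted step $X_{\Sigma^c}\dasharrow X_{(\Sigma')^c}$ factors into smooth flops and smooth blow-downs. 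It is proved by induction on the index (resp.\ on $a+b$). For example, for a type $\mathrm{I}$ flip one inserts the mld-ray $V=A+B-C$ of $ABD$ and performs the Atiyah flop $AB\rightsquigarrow CV$. What remains is a type $\mathrm{I}$ flip of smaller index (or a smooth curve blow-down), landing on a partial canonical resolution of $\Sigma'$.

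Since $X$ and $Y$ are their own canonical resolutions, the lifts concatenate to the required factorization. Projectivity over $Y$ then reduces to exhibiting strictly convex support functions on a handful of small local fans. The singular models serve only as a guide, which is exactly how the paper circumvents the obstacle you ran into.
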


\noindent\textit{Strategy}: The basic idea is simple, we can run a (toric) $K_X$-MMP over $Y$
\[
X=X_0\dasharrow X_1\dasharrow\cdots\dasharrow X_n=Y
\]
which terminates with $Y$ itself and we may assume $X_i=X_{\Sigma_i}$ is defined by the fan $\Sigma_i$. The issues are 
\begin{enumerate}
    \item $X_i\dasharrow X_{i+1}$ are flips and divisorial contractions, not flops and smooth blow-downs.
    \item $\Sigma_i$ may not be regular, i.e $X_i$ is not smooth.
\end{enumerate}
The good news is those $\Sigma_i$ are simplicial and terminal, i.e $X_{\Sigma_i}$ have terminal $\Qq$-factorial singularities, and they are well classified. For a simplicial terminal fan $\Sigma_i$, there is a canonical regular fan $\Sigma^c_i$ inducing a canonical (minimal) toric resolution $X_{\Sigma_i^c}\to X_{\Sigma_i}$. The key point is to show that the canonical lifting $$X_{\Sigma_i^c}\dasharrow X_{\Sigma_{i+1}^c}$$ of each step of the MMP $X_i\dasharrow X_{i+1}$ could be decomposed into flops and smooth blow-downs. Then we are done since $\Sigma_0^c=\Sigma_0,~ \Sigma_n^c=\Sigma_n$.

\section{Preliminary}
We recommend the readers to \cite{CLS10} for a detailed introduction of toric varieties. We will recall some basic facts here without rigorous proofs.

In our paper we only deal with threefolds, let $N:=\Zz^3$ and $M:=\Hom(N,\Zz)=N^*$.
A fan $\Sigma$ in $N_\Rr\simeq\Rr^3$ denotes a collection of strongly convex, rational polyhydral cones $\sigma\subseteq N_\Rr$ such that 
\begin{enumerate}
    \item $\forall \sigma\in\Sigma$, every face of $\sigma$ is also in $\Sigma$.
    \item $\forall \sigma,\sigma'\in\Sigma$, $\sigma\cap\sigma'$ is a face of both $\sigma$ and $\sigma'$.
\end{enumerate}
The dual cone $\sigma^{\vee}\subseteq M_\Rr$ is $\{w\in M~|~(w,v)\ge0,\forall v\in\sigma\}$. Then we can define an affine toric variety
\[
X_\sigma:=\Spec (\Cc[\sigma^\vee\cap M]).
\]
Notice that $\Cc[\sigma^\vee\cap M]\to \Cc[\tau^\vee\cap M]$ is a localization that induces an open embedding $X_\tau\to X_\sigma$ if $\tau$ is a face of $\sigma$, we can glue those $X_\sigma$ together via their common faces and get the toric variety $X_\Sigma$. It worth mentioning that the zero cone is a face of every $\sigma$ and $X_{\cone{0}}=\Spec(\Cc[M])\simeq (\Cc^*)^3$ is the common tori.

If $\dim \sigma=0$, then $\sigma=\cone{0}$.

If $\dim \sigma=1$, then $\sigma$ is a ray generated by a unique primitive vector $v\in N$ (we also use $A,B,C,...$) and sometimes we just write $A$ for (the cone) $\cone{A}$. They are in one-to-one correspondence to the toric invariant prime divisors in $X_\Sigma$. 

If $\dim \sigma=2$, then $\sigma$ can be written as $\cone{A,B}$ (usually denoted by $AB$), where $A,B$ are the primitive generators of $\sigma$'s two 1-dimensional faces. They are in one-to-one correspondence to the irreducible toric invariant curves in $X_\Sigma$, which are all smooth rational. 

If $\dim \sigma=3$, then $\sigma$ is maximal. They are in one-to-one correspondence to the toric invariant points in $X_\Sigma$.

We say $\Sigma$ is {\it simplicial} if any $\sigma\in\Sigma$ is simplicial, i.e. the cone $\sigma$ can be generated by a set of linear independent vectors. If $\dim\sigma=0,1,2$, it is automatically simplicial. If $\dim\sigma=3$, it implies that $\sigma=\cone{A,B,C}$, where $A,B,C$ are primitive and we often write it as $ABC$. One can show that $\Sigma$ is simplicial if and only if $X_\Sigma$ is $\Qq$-factorial.

We say $\Sigma$ is {\it regular} if any $\sigma\in\Sigma$ is regular, i.e. $\sigma$ is simplicial and the set of linear independent vectors above can be extended to a $\Zz$ basis of $N$. Otherwise we call it \emph{singular}. 
We define the multiplicity of a simplicial maximal cone $\sigma=\cone{A,B,C}$ as 
\[
\mult(\sigma):=|\det(A,B,C)|,
\]
which is also the lattice \emph{index} of $\Zz A+\Zz B+\Zz C\subseteq N$. A maximal cone $\sigma$ is regular iff $\mult\sigma=1$. One can show $X_\Sigma$ is smooth if and only if $\Sigma$ is regular.

If $\Sigma'$ is a subdivison of $\Sigma$ in $N$, i.e. any $\sigma'\in \Sigma'$ is contained in some $\sigma\in\Sigma$ and $|\Sigma'|=|\Sigma|$, where the support $|\Sigma|:=\cup_{\sigma\in\Sigma}\sigma$, then it induces a toric proper birational morphism $f:X_{\Sigma'}\to X_{\Sigma}$ using the affine description above. $f$ is small iff the set of 1-dimensions cones $\Sigma(1)\subseteq\Sigma$ equals $\Sigma'(1)$. In this paper we often (locally) illustrate a fan by looking at its intersection with certain affine hyperplanes in $N$ so that 1-dimensional cones becomes points $A,B,C,...$.

\subsection{Blowdowns and flops}\label{subsec: toric blow-down and flop}
Let $ABC$ be a maximal simplicial cone in a fan $\Sigma$, and $W\in N$ is a primitive vector in the interior of $ABC$. We can replace $ABC$ by 
\[
ABW \ , \ ACW \ , \ BCW
\]
and get a new fan $\Sigma'$, which is the (star) subdivision of $\Sigma$ by inserting $W$, inducing a blow-up morphism $f: X_{\Sigma'}\to X_{\Sigma}$ at the invariant point $x$ determined by $ABC$. There is an invariant divisor determined by $W$.

If $ABC$ is regular so $X_\Sigma$ is smooth at $x$, we can write $W=aA+bB+cC$ for $(a,b,c)\in\Zz^3_{>0}$, then $f$ is actually the weighted blow-up at $x$ with weight $(a,b,c)$ and the exceptional divisor is isomorphic to $\Pp(a,b,c)$. If $a=b=c=1$, then $f$ is the ordinary point blowup. We also call $f$ a smooth point blow-down.
\begin{figure}[htbp]
\centering
\begin{tikzpicture}[scale=1.1, >=Latex]

\begin{scope}[shift={(9.100,0.000)}]
\coordinate (p0) at (0.77500,1.34234);
\coordinate (p1) at (1.55000,0.00000);
\coordinate (p2) at (0.00000,0.00000);
\coordinate (p3) at (0.77500,0.44745);
\draw[fanedge] (p0)--(p1);
\draw[fanedge] (p0)--(p2);
\draw[fanedge] (p0)--(p3);
\draw[fanedge] (p1)--(p2);
\draw[fanedge] (p1)--(p3);
\draw[fanedge] (p2)--(p3);
\node[fandot] at (p0) {};
\node[fanlabel] at (0.77500,1.48234) {$C$};
\node[fandot] at (p1) {};
\node[fanlabel] at (1.66000,-0.13000) {$B$};
\node[fandot] at (p2) {};
\node[fanlabel] at (-0.11000,-0.13000) {$A$};
\node[fandot] at (p3) {};
\node[fanlabel] at (0.89500,0.51745) {$W$};
\end{scope}
\begin{scope}[shift={(11.400,0.000)}]
\coordinate (p0) at (0.77500,1.34234);
\coordinate (p1) at (1.55000,0.00000);
\coordinate (p2) at (0.00000,0.00000);
\draw[fanedge] (p0)--(p1);
\draw[fanedge] (p0)--(p2);
\draw[fanedge] (p1)--(p2);
\node[fandot] at (p0) {};
\node[fanlabel] at (0.77500,1.48234) {$C$};
\node[fandot] at (p1) {};
\node[fanlabel] at (1.66000,-0.13000) {$B$};
\node[fandot] at (p2) {};
\node[fanlabel] at (-0.11000,-0.13000) {$A$};
\end{scope}
\draw[->] (10.88,.60)--(11.18,.60);
\node[paneltitle] at (11.02,-.49) {Point blow-down};
\end{tikzpicture}
\end{figure}

Let $AB$ be a 2-dimensional cone in $\Sigma$ and $U\in N$ a primitive vector in the interior of $AB$. We can insert $U$ and replace every adjacent cone $ABC$ by
$AUC,UBC$ and get a new fan $\Sigma'$. Thus, if $AB$ is an interior wall with adjacent cones $ABC,ABD$, then the subdivision replaces them with
\[
UAC,\ UBC\ ;\ UAD,\ UBD.
\]
If $AB$ is on the boundary of the support, then there is only one adjacent maximal
cone $ABC$ and the subdivision just replace it with $UAC,UBC$. In either case the induced $f: X_{\Sigma'}\to X_{\Sigma}$ is a blow-up morphism along the invariant curve determined by $AB$.

\begin{figure}[htbp]
\centering
\begin{tikzpicture}[
    x=0.78cm, y=0.78cm,
    font=\scriptsize,
    line cap=round,
    line join=round,
    >=Stealth,
    edge/.style={line width=0.4pt}
]

\begin{scope}
    \coordinate (A) at (0,0);
    \coordinate (B) at (2,0);
    \coordinate (C) at (1,1.45);
    \coordinate (D) at (0.55,-1.25);
    \coordinate (U) at (1,0);

    \draw[edge] (A)--(C)--(B)--(D)--cycle;
    \draw[edge] (A)--(U)--(B);
    \draw[edge] (U)--(C);
    \draw[edge] (U)--(D);

    \foreach \P in {A,B,C,D,U}
        \fill (\P) circle (1.2pt);

    \node[below left] at (A) {$A$};
    \node[below] at (B) {$B$};
    \node[above] at (C) {$C$};
    \node[below left] at (D) {$D$};
    \node[above right] at (U) {$U$};
\end{scope}

\draw[->] (2.35,0)--(2.95,0);


\begin{scope}[shift={(3.3,0)}]
    \coordinate (A) at (0,0);
    \coordinate (B) at (2,0);
    \coordinate (C) at (1,1.45);
    \coordinate (D) at (0.55,-1.25);

    \draw[edge] (A)--(C)--(B)--(D)--cycle;
    \draw[edge] (A)--(B);

    \foreach \P in {A,B,C,D}
        \fill (\P) circle (1.2pt);

    \node[below left] at (A) {$A$};
    \node[below] at (B) {$B$};
    \node[above] at (C) {$C$};
    \node[below left] at (D) {$D$};
\end{scope}

\node[font=\footnotesize] at (5.95,-1.95) {Curve blow-down};

\begin{scope}[shift={(6.7,0)}]
    \coordinate (A) at (0,-0.7);
    \coordinate (B) at (2,-0.7);
    \coordinate (C) at (1,0.75);
    \coordinate (U) at (1,-0.7);

    \draw[edge] (A)--(C)--(B)--cycle;
    \draw[edge] (A)--(U)--(B);
    \draw[edge] (U)--(C);

    \foreach \P in {A,B,C,U}
        \fill (\P) circle (1.2pt);

    \node[below left] at (A) {$A$};
    \node[below right] at (B) {$B$};
    \node[above] at (C) {$C$};
    \node[below] at (U) {$U$};
\end{scope}

\draw[->] (8.95,0)--(9.55,0);


\begin{scope}[shift={(9.9,0)}]
    \coordinate (A) at (0,-0.7);
    \coordinate (B) at (2,-0.7);
    \coordinate (C) at (1,0.75);

    \draw[edge] (A)--(C)--(B)--cycle;

    \foreach \P in {A,B,C}
        \fill (\P) circle (1.2pt);

    \node[below left] at (A) {$A$};
    \node[below right] at (B) {$B$};
    \node[above] at (C) {$C$};
\end{scope}

\end{tikzpicture}

\label{fig: toric curve blow-down}
\end{figure}

If every adjacent maximal cone of $AB$ is regular and $U=A+B$, then $f$ is the ordinary blow-up along a smooth curve and we also call it a smooth curve blow-down.

Suppose a fan $\Sigma$ contains two maximal cones $ABC,ABD$ sharing the same face $AB$ and
\[
aA+bB=cC+dD \text{ for some } (a,b,c,d)\in \Zz^4_{>0},
\]
so that $\Omega=\cone{A,B,C,D}$ is strictly convex. Replacing $ABC,ABD$ with $\Omega$ (resp. $ACD,BCD$) gives us a fan $\hat\Sigma$ (resp. $\Sigma'$). Then $\Sigma$ and $\Sigma'$ can be regarded as two subdivisions of $\hat\Sigma$ by adding $AB$ or $CD$, inducing two small toric proper birational morphisms
\[
f:X_{\Sigma}\to X_{\hat\Sigma}~,~f':X_{\Sigma'}\to X_{\hat\Sigma}
\]

\begin{figure}[htbp]
\centering
\begin{tikzpicture}[>=Latex]
\begin{scope}[shift={(0.000,0.000)}]
\coordinate (p0) at (0.77500,1.78979);
\coordinate (p1) at (1.55000,0.44745);
\coordinate (p2) at (0.00000,0.44745);
\coordinate (p3) at (0.77500,0.00000);
\draw[fanedge] (p0)--(p1);
\draw[fanedge] (p0)--(p2);
\draw[fanedge] (p1)--(p2);
\draw[fanedge] (p1)--(p3);
\draw[fanedge] (p2)--(p3);
\node[fandot] at (p0) {};
\node[fanlabel] at (0.77500,1.92979) {$C$};
\node[fandot] at (p1) {};
\node[fanlabel] at (1.66000,0.31745) {$B$};
\node[fandot] at (p2) {};
\node[fanlabel] at (-0.11000,0.31745) {$A$};
\node[fandot] at (p3) {};
\node[fanlabel] at (0.77500,-0.13000) {$D$};
\end{scope}
\begin{scope}[shift={(2.300,0.000)}]
\coordinate (p0) at (0.77500,1.78979);
\coordinate (p1) at (1.55000,0.44745);
\coordinate (p2) at (0.00000,0.44745);
\coordinate (p3) at (0.77500,0.00000);
\draw[fanedge] (p0)--(p1);
\draw[fanedge] (p0)--(p2);
\draw[fanedge] (p1)--(p3);
\draw[fanedge] (p2)--(p3);
\node[fandot] at (p0) {};
\node[fanlabel] at (0.77500,1.92979) {$C$};
\node[fandot] at (p1) {};
\node[fanlabel] at (1.66000,0.31745) {$B$};
\node[fandot] at (p2) {};
\node[fanlabel] at (-0.11000,0.31745) {$A$};
\node[fandot] at (p3) {};
\node[fanlabel] at (0.77500,-0.13000) {$D$};
\end{scope}
\draw[->] (1.78,.60)--(2.08,.60);
\begin{scope}[shift={(4.600,0.000)}]
\coordinate (p0) at (0.77500,1.78979);
\coordinate (p1) at (1.55000,0.44745);
\coordinate (p2) at (0.00000,0.44745);
\coordinate (p3) at (0.77500,0.00000);
\draw[fanedge] (p0)--(p1);
\draw[fanedge] (p0)--(p2);
\draw[fanedge] (p0)--(p3);
\draw[fanedge] (p1)--(p3);
\draw[fanedge] (p2)--(p3);
\node[fandot] at (p0) {};
\node[fanlabel] at (0.77500,1.92979) {$C$};
\node[fandot] at (p1) {};
\node[fanlabel] at (1.66000,0.31745) {$B$};
\node[fandot] at (p2) {};
\node[fanlabel] at (-0.11000,0.31745) {$A$};
\node[fandot] at (p3) {};
\node[fanlabel] at (0.77500,-0.13000) {$D$};
\end{scope}
\draw[->] (4.38,.60)--(4.08,.60);

\node[paneltitle] at (3.07,-.49) {$K$-flop and $K$-flip};

\end{tikzpicture}
\label{fig: flips and flops}
\end{figure}

If $a+b<c+d$ (resp. $a+b>c+d$), then $X_{\Sigma}\dasharrow X_{\Sigma'}$ (resp. $X_{\Sigma'}\dasharrow X_{\Sigma}$) is a $K$-flip, where $K$ stands for the canonical divisor. If $a+b=c+d$, then $f,f'$ are flopping contractions and $X_{\Sigma}\dasharrow X_{\Sigma'}$ is a $K$-flop.

If $ABC, ABD$ are regular, then we see $X_{\Sigma}\dasharrow X_{\Sigma'}$ is a $K$-flop if and only if
\[A+B=C+D.\]

\subsection{Canonical resolution for terminal simplicial fans}\label{subsec: terminal fans}

We say $\Sigma$ is \emph{terminal} if $X_{\Sigma}$ has terminal singularities. One can easily check that every 2-dimensional cone must be regular, which corresponds to the fact that terminal threefolds has isolated singularities.


The famous terminal lemma tells us a singular simplicial terminal 3-dimensional cone $\sigma$ has the form
\[
\sigma=\cone{e_1,e_2,e_3},\ \
 N\cong\Zz^3+\Zz\frac{(1,a,r-a)}r,
 \ \ 0<a<r,\ \ \gcd(a,r)=1. 
\]
Here $e_1=(1,0,0),e_2=(0,1,0),e_3=(0,0,1)$ generate a sublattice $\Zz^3\subseteq N$ of index $r$ so $\mult\sigma=r$. (Notice that they are not a basis
of $N$.) We also call this cone of type $\frac{1}{r}(1,a,r-a)$. Using this coordinate, the vector
\[
w=\frac{e_1+ae_2+(r-a)e_3}{r}=(\frac{1}{r},\frac{a}{r},\frac{r-a}{r})=\frac{1}{r}(1,a,r-a)\in N
\]
is the unique $v=(v_1,v_2,v_3)\in N$ in the interior of $\cone{e_1,e_2,e_3}$ such that $|v|_\sigma:=v_1+v_2+v_3=1+\frac{1}{r}$. If we think the primitive $v$ as divisorial (toric) valuations, then $|v|_\sigma$ actually computes the log discrepancy of $X_\sigma$ at $v$ and $w$ is the unique place computing the minimal log discrepancy (mld for short). Therefore $w$ is canonically determined by $\sigma$, independent of the coordinate representations.

We call $w$ the \emph{mld-ray} of $\sigma$ and we can define a star subdivision by inserting $w$, i.e. replacing $\cone{e_1,e_2,e_3}$ with 
\[
\cone{w,e_2,e_3},\cone{e_1,w,e_3},\cone{e_1,e_2,w}.
\]

\begin{lem}\label{lem: first insertion}
$\cone{w,e_2,e_3},\cone{e_1,w,e_3},\cone{e_1,e_2,w}$ are terminal and the multiplicities/indices are $1,a,r-a$ respectively.
\end{lem}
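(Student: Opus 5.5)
The plan is to compute the three multiplicities directly as determinants with respect to a $\Zz$-basis of $N$. Terminality will then follow either from the terminal lemma, once each new cone is identified as being of the form $\frac{1}{s}(1,b,s-b)$, or, more intrinsically, from the fact that terminality is a local condition. A subdivision of a terminal cone by inserting the mld-ray produces cones whose lattice points still have log discrepancy bigger than $1$, except at the generators.

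\textbf{Multiplicities.} Since $\Zz e_1+\Zz e_2+\Zz e_3$ has index $r$ in $N$, for any $u,v,x\in N$ the normalized determinant $\mult\cone{u,v,x}$ equals $r\cdot|\det(u,v,x)|$, where $\det$ is computed in the $e_i$ coordinates. With $w=\frac1r(1,a,r-a)$ this gives
\[
r|\det(w,e_2,e_3)|=1,\quad r|\det(e_1,w,e_3)|=a,\quad r|\det(e_1,e_2,w)|=r-a .
\]
In particular $\cone{w,e_2,e_3}$ is regular. Before relying on this formula, I need to check that $w$ is primitive in $N$. This holds because $\gcd(1,r)=1$: the class of $w$ generates $N/\Zz^3\cong\Zz/r$.

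\textbf{Terminality.} I will use the characterization that a simplicial cone $\cone{u_1,u_2,u_3}$ is terminal iff the only lattice points $v$ in the closed simplex region $\{\sum t_iu_i: t_i\ge0, \sum t_i\le1\}$ are $0$ and the vertices $u_i$. Here the $t_i$ are the coordinates of $v$ with respect to the $u_i$, and $\sum t_i$ is the log discrepancy function $|\cdot|$ attached to that cone.

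Take $\tau=\cone{e_1,w,e_3}$, and suppose a lattice point $v=t_1e_1+t_2w+t_3e_3$ lies in $\tau$ with $\sum t_i\le 1$. Writing $v$ in $e$-coordinates gives
\[
|v|_\sigma=t_1+t_2(1+\tfrac1r)+t_3 .
\]
Since the region is contained in $\sigma$ and $\sigma$ is terminal, $|v|_\sigma>1$ unless $v$ is $0$ or one of the $e_i$. On the other hand $|v|_\sigma\le(1+\frac1r)\sum t_i\le 1+\frac1r$, and equality forces $v=w$ by the uniqueness of $w$ stated above. The remaining task is to exclude lattice points $v\ne w$ with $1<|v|_\sigma<1+\frac1r$ and $\sum t_i\le 1$.

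The key observation is that the levels $|v|_\sigma$ for $v\in N$ lie in $\frac1r\Zz$. This holds because $|\cdot|_\sigma$ takes value $1+\frac1r$ on the generator $w$ of $N/\Zz^3$ and integer values on $\Zz^3$. So no lattice point has $|v|_\sigma$ strictly between $1$ and $1+\frac1r$, which finishes the argument. The same argument applies verbatim to $\cone{e_1,e_2,w}$ and $\cone{w,e_2,e_3}$, the latter being trivially terminal since it is regular.

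\textbf{Main obstacle.} The delicate point is the inequality bookkeeping between the two log discrepancy functions, namely $|\cdot|_\sigma$ and the linear function $\sum t_i$ for the subcone. One needs $|v|_\sigma\le(1+\frac1r)\sum t_i$ together with the discreteness of $|\cdot|_\sigma$ in $\frac1r\Zz$. If this fails, a fallback is the explicit route: change coordinates so that $\cone{e_1,w,e_3}$ is visibly of type $\frac1a(\ast,\ast,\ast)$, and verify it has the form $\frac1a(1,b,a-b)$ up to permutation.
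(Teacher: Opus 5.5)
Your proposal is correct, but for the terminality claim it takes a genuinely different route from the paper.

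The paper works only with lattice generators. From $rw=e_1+ae_2+(r-a)e_3$ it sees that $w,e_2,e_3$ already generate $N$. Eliminating $e_2$, it rewrites $N=\Zz e_1+\Zz w+\Zz e_3+\Zz\frac1a(e_1+sw+(a-s)e_3)$ with $0<s<a$ and $s\equiv -r \pmod a$. This exhibits $\cone{e_1,w,e_3}$ as a cyclic quotient of type $\frac1a(1,s,a-s)$, so the terminal lemma (used in its ``if'' direction) gives terminality and index $a$ at once.

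You instead read off the indices from $r\,|\det_e(\cdot)|$ and prove terminality with the lattice-point criterion. On the new cone $|v|_\sigma\le(1+\frac1r)\sum t_i\le 1+\frac1r$. Terminality of $\sigma$ gives $|v|_\sigma>1$ unless $v\in\{0,e_1,e_3\}$ (note $e_2\notin\tau$). The discreteness $|N|_\sigma\subseteq\frac1r\Zz$ then closes the gap. This chain is sound. Equality in it already forces $t_1=t_3=0$ and $t_2=1$, so you do not need the uniqueness of $w$ (which the paper only states for interior points), and your ``fallback'' is unnecessary.

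What your route buys:
\begin{itemize}
\item it avoids the terminal lemma altogether;
\item it handles the degenerate case $a=1$ uniformly;
\item it shows more generally that star-subdividing a terminal simplicial cone at a lattice point of minimal log discrepancy keeps all new cones terminal. This is in the spirit of the log-discrepancy comparisons the paper uses in Lemmas \ref{lem: terminal curve blow-down} and \ref{lem: terminal star subdiv of sing cone inserts mld ray}.
\end{itemize}
What the paper's computation buys is the explicit type $\frac1a(1,s,a-s)$ of the new cones, i.e.\ where the next mld-ray sits when the resolution is iterated.

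One small slip: the fact that the class of $w$ generates $N/\Zz^3$ does not imply that $w$ is primitive, since the non-primitive vector $(r+1)w$ has the same class. The right reason is that the first coordinate $\frac1r$ of $w$ is the smallest positive value allowed in $N\subseteq\frac1r\Zz^3$. The determinant formula for the multiplicities does not need primitivity in any case.
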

\begin{proof}
Notice that $w,e_1,e_2,e_3$ can generate $N$ and satisfy the equality
\[
rw=e_1+ae_2+(r-a)e_3,
\]
so $w,e_2,e_3$ can generate $N$ and $\cone{w,e_2,e_3}$ is regular, with multiplicity 1. For $\cone{e_1,w,e_3}$, we can write
\begin{align*}
 N=&\Zz e_1+\Zz w+\Zz e_3+\Zz\frac{1}{a}(-e_1+rw-(r-a)e_3) \\
  =&\Zz e_1+\Zz w+\Zz e_3+\Zz\frac{1}{a}(e_1-rw+re_3)\\
  =&\Zz e_1+\Zz w+\Zz e_3+\Zz\frac{1}{a}(e_1+sw+(a-s)e_3),
\end{align*}
where $0<s<a, ~s\equiv -r\mod a$, therefore by the terminal lemma $\cone{e_1,w,e_3}$ is terminal with index $a$. Similarly, $\cone{e_1,e_2,w}$ is terminal with index $r-a$.
\end{proof}

For a terminal simplicial fan $\Sigma$, we can define a number 
\[
G(\Sigma)=\sum_{\sigma\in\Sigma(3)}(\mult(\sigma)-1),
\]
where the sum is over all the 3-dimensional cones $\tau$. If we make the star subdivision for a singular $\sigma$ as described above, then the new fan $\Sigma'$ has
\[
G(\Sigma')=G(\Sigma)-(r-1)+(a-1)+(r-a-1)=G(\Sigma)-1
\]
by Lemma \ref{lem: first insertion}. Selecting singular maximal cones $\sigma$ and making the insertions repeatedly, we will end up with a regular fan $\Sigma^c$.
\begin{defn}\label{def: canonical resolution}
We define the subdivison $\Sigma^c$ (resp. $X_{\Sigma^c}$) to be the canonical resolution of $\Sigma$ (resp. $X_\Sigma$), and we have the following properties:
\begin{enumerate}
    \item The construction $\Sigma\rightsquigarrow\Sigma^c$ fixes every regular cone.
    \item $\Sigma^c$ is unique and does not depend on the order (although $\Sigma\rightsquigarrow\Sigma^c$ does).
    \item Taking the canonical resolution commutes with restriction to subfans.
    \item If $\Sigma'$ appears in the middle step of our construction $\Sigma\rightsquigarrow\Sigma^c$, then $\Sigma'^{c}=\Sigma^c$.
\end{enumerate}
Indeed, the insertions in distinct maximal cones is unique and commute because they lie in
the respective interiors, while their common faces remain
unchanged. The independence of order and the above compatibility follows from this observation. The following graphs illustrate the canonical resolution of a terminal cone of type $\frac{1}{5}(1,2,3)$.
\end{defn}
\begin{figure}[htbp]
\centering
\begin{tikzpicture}[
    x=0.95cm, y=0.95cm,
    line cap=round,
    line join=round,
    edge/.style={line width=0.4pt},
    every node/.style={font=\scriptsize}
]

\begin{scope}
    \coordinate (A) at (0,0);
    \coordinate (B) at (3,0);
    \coordinate (C) at (1.5,2.6);

    \draw[edge] (A)--(B)--(C)--cycle;

    \foreach \P in {A,B,C}
        \fill (\P) circle (1pt);

    \node[below left] at (A) {$A$};
    \node[below right] at (B) {$B$};
    \node[above] at (C) {$C$};

    \node at (1.5,0.9) {$5$};

    \node[below] at (1.5,-0.55)
        {Index $5$};
\end{scope}

\begin{scope}[shift={(4.5,0)}]
    \coordinate (A) at (0,0);
    \coordinate (B) at (3,0);
    \coordinate (C) at (1.5,2.6);
    \coordinate (W) at (1.75,1.3);

    \draw[edge] (A)--(B)--(C)--cycle;

    \draw[edge] (A)--(W);
    \draw[edge] (B)--(W);
    \draw[edge] (C)--(W);

    \foreach \P in {A,B,C,W}
        \fill (\P) circle (1pt);

    \node[below left] at (A) {$A$};
    \node[below right] at (B) {$B$};
    \node[above] at (C) {$C$};
    \node[below] at (1.75,1.2) {$W$};

    \node at (1.1,1.35) {$2$};
    \node at (2.0,1.35) {$1$};
    \node at (1.5,0.4) {$3$};

    \node[below] at (1.5,-0.55)
        {First insertion: indices $1,2,3$};
\end{scope}

\begin{scope}[shift={(9,0)}]
    \coordinate (A) at (0,0);
    \coordinate (B) at (3,0);
    \coordinate (C) at (1.5,2.6);

    \coordinate (P) at (1.12,1.3);
    \coordinate (W) at (1.75,1.3);
    \coordinate (Q) at (1.35,0.58);
    \coordinate (R) at (1.92,0.37);

    \draw[edge] (A)--(B)--(C)--cycle;

    \draw[edge] (C)--(P);
    \draw[edge] (C)--(W);
    \draw[edge] (P)--(W);

    \draw[edge] (A)--(P);
    \draw[edge] (A)--(W);
    \draw[edge] (A)--(Q);
    \draw[edge] (A)--(R);

    \draw[edge] (W)--(Q);
    \draw[edge] (W)--(R);
    \draw[edge] (Q)--(R);
    \draw[edge] (R)--(B);
    \draw[edge] (W)--(B);

    \foreach \P in {A,B,C,P,W,Q,R}
        \fill (\P) circle (1pt);

    \node[below left] at (A) {$A$};
    \node[below right] at (B) {$B$};
    \node[above] at (C) {$C$};
    \node[above right] at (1.65,1.20) {$W$};

    \node[below] at (1.5,-0.55)
        {Canonical resolution};
\end{scope}

\end{tikzpicture}

\end{figure}

\subsection{Toric MMPs}\label{subsec: toric mmp}
Let $f:X\to Y$ be a projective toric morphism betweem toric varieties and $X$ is $\Qq$-factorial, then we can run toric MMP over $Y$ and it terminates by \cite{FS04}. In our setting $f$ is birational and $X, Y$ are smooth, therefore there is a toric $K_X$-MMP over $Y$:
\[
X=X_0\dasharrow X_1\dasharrow\cdots\dasharrow X_n=Y
\]
such that
\begin{itemize}
    \item each $X_i$ is terminal and $\Qq$-factorial,
    \item each $X_i\to Y$ is projective, and
    \item $X_i\dasharrow X_{i+1}$ is either a divisorial contraction or a $K$-flip.
\end{itemize}
Assume the fan of $X_i$ is $\Sigma_i$, then $X_i\dasharrow X_{i+1}$ can be classified into the following cases, where we will use the descriptions in \ref{subsec: toric blow-down and flop}.

\vspace{.5em}

\noindent\textbf{Case 1}. $X_i\to X_{i+1}$ is a divisorial contraction to an invariant curve on $X_{i+1}$ (a curve blow-down). Let $AB$ be the regular 2-dimensional cone corresponding to the curve, then $\Sigma_{i}$ is the subdivision of $\Sigma_{i+1}$ by inserting some primitive $U$ in the interior of $AB$. Since $AU,BU$ need to be regular, we must have $U=A+B$ by looking at the sub-lattice $\Zz A+\Zz B$. By Lemma \ref{lem: terminal curve blow-down} all the adjacent maximal cone is regular, and $X_i\to X_{i+1}$ is actually a smooth curve blow-down.


\vspace{.5em}

\noindent\textbf{Case 2}. $X_i\to X_{i+1}$ is a divisorial contraction to an invariant point $x\in X_{i+1}$ (a point blow-down). Let $\sigma$ be the maximal cone correponding to $x$. Then $\Sigma_i$ is a star subdivision of $\Sigma_{i+1}$ by inserting a primitive interior ray $u\in \sigma$.
\begin{enumerate}
    \item If $\sigma$ is singular, i.e. $x$ is a singular point. Then by Lemma \ref{lem: terminal star subdiv of sing cone inserts mld ray} below $u$ must be the mld-ray of $\sigma$.
    \item If $\sigma$ is regular, i.e. $x$ is a smooth point. Then by Lemma \ref{lem: terminal point blow-down to a smooth point} the inserted ray can be written as $u=aA+bB+C$, where $\sigma=\cone{A,B,C}$, $a,b>0$ and $\gcd(a,b)=1$. $X_i\to X_{i+1}$ is therefore a (weighted) $(a,b,1)$-blowup.
\end{enumerate}

\vspace{.5em}

\noindent\textbf{Case 3}. $X_i\dasharrow X_{i+1}$ is a (terminal) $K$-flip. Then the fan $\Sigma_i\rightsquigarrow\Sigma_{i+1}$ is as described in \ref{subsec: toric blow-down and flop}, where the change only takes place in some $\Omega=\cone{A,B,C,D}$,
\[
 \{ABC,ABD\}\rightsquigarrow\{ACD,BCD\}.
\]
By the classification in \cite[Theorem 3.1]{FSTU09} we may assume $ABC$ is regular and
one of the following holds:
\begin{align*}
 \mathrm{I}(a,b):\ &D=aA+bB-rC, &r=a+b,\  a,b>0,\ &\gcd(a,b)=1;\\
 \mathrm{II}(r,a):\ &D=aA+B-rC, &0<a<r,\ &\gcd(a,r)=1. 
\end{align*}
The two families overlap when $a=r-1$ and $b=1$. The readers are encouraged to check (as an exercise) that 
$ABD,ACD,BCD$ are terminal cones of index $r,b,a$.

\begin{lem}\label{lem: terminal curve blow-down}
Let $ABC$ be a terminal maximal cone and $U=A+B$, if both $ACU$ and $BCU$ are terminal. Then $ABC,ACU,BCU$ are all regular.
\end{lem}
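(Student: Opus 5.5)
We argue through lattice indices. Since $U=A+B$, we have $\Zz A+\Zz B+\Zz C=\Zz A+\Zz U+\Zz C=\Zz U+\Zz B+\Zz C$, because $\det(A,U,C)=\det(A,A+B,C)=\det(A,B,C)$ and similarly $\det(U,B,C)=\det(A,B,C)$. Hence all three cones have the same multiplicity $r=\mult(ABC)$, and also the same quotient group $N/(\Zz A+\Zz B+\Zz C)\cong\Zz/r$. It therefore suffices to assume $r>1$ and derive a contradiction from the terminality of $ABC$, $ACU$ and $BCU$.

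By the terminal lemma, a singular terminal cone of index $r$ contains exactly one lattice point $w$ of log discrepancy $1+\frac1r$. For the cone $ABC$ we write
\[
w=\frac{\alpha A+\beta B+\gamma C}{r},\qquad 0<\alpha,\beta,\gamma<r,\qquad \alpha+\beta+\gamma=r+1.
\]
In the terminal lemma's normal form one of $\alpha,\beta,\gamma$ equals $1$ and the other two sum to $r$. The group $N/(\Zz A+\Zz B+\Zz C)$ is cyclic, generated by the class of $w$, and its generators are the classes $kw$ with $\gcd(k,r)=1$. Each such class has representative $\frac{1}{r}(\overline{k\alpha},\overline{k\beta},\overline{k\gamma})$ in the basis $A,B,C$, where bars denote residues in $[0,r)$.

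Next we rewrite these elements in the basis $A,U,C$ of the same sublattice. The element $xA+yB+zC$ equals $(x-y)A+yU+zC$. Terminality of $ACU$ says that every nonzero class has a representative with coordinates in $[0,1)$ in the basis $A,U,C$, and that all these coordinates are nonzero (no lattice points on faces, since the $2$-faces are regular) with sum greater than $1$. In the $A,U,C$ basis the class of $w$ has coordinates $\frac1r(\overline{\alpha-\beta},\beta,\gamma)$. The coordinate $\overline{\alpha-\beta}$ must be nonzero, which gives $\alpha\neq\beta$. Similarly, in the basis $U,B,C$ the class of $w$ is $\frac1r(\alpha,\overline{\beta-\alpha},\gamma)$.

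The main step is to combine the terminal-lemma constraint on both new cones with the sum condition. Terminality of $ACU$ and $BCU$ forces the types $\{\overline{\alpha-\beta},\beta,\gamma\}$ and $\{\alpha,\overline{\beta-\alpha},\gamma\}$ to have the shape $\{k,-k,1\}$ up to a unit multiple. Equivalently, for every unit $k$ the coordinate sums must equal $r+1$ or $2r-1$, that is, $\sum\overline{kx_i}\equiv 0$ with one entry $\pm1$. I would do a case split on which of $\alpha,\beta,\gamma$ equals $1$.

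If $\gamma=1$, then $\alpha+\beta=r$. In the basis $A,U,C$ the coordinate sum is $\overline{\alpha-\beta}+\beta+1$, and the type condition requires $\overline{\alpha-\beta}+\beta\equiv 0$, i.e. $\alpha\equiv 0$, which is impossible.

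If $\alpha=1$, then $\beta+\gamma=r$. The $A,U,C$ triple is $(\overline{1-\beta},\beta,\gamma)=(\gamma+1,\beta,\gamma)$. For this triple to be terminal type, its sum $r+\gamma+1$ forces one of its entries to equal $1$ and the other two to sum to $r$. The only possibility is $\beta=1$, giving $\gamma+1+\gamma=r$, which contradicts $\gamma+\beta=r$ unless the degenerate small cases apply, and those are checked directly. The case $\beta=1$ is symmetric, using $BCU$.

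The obstacle I expect is making the "terminal type" criterion precise: the triple must be a permutation of $(1,a,r-a)$ up to multiplication by a unit, not merely have sum $r+1$. I would invoke the terminal lemma (White–Frumkin) in the form that, for every $k$ coprime to $r$, the condition $\sum\overline{kx_i}/r>1$ holds. I would then test with a suitable $k$, for example $k$ inverse to $\gamma$, to get the contradiction in each branch. The conclusion $r=1$ then yields regularity of all three cones.
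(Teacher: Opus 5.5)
Your set-up is fine up to where you write the class of $w$ as $\frac1r(\overline{\alpha-\beta},\beta,\gamma)$ in the basis $A,U,C$ (and as $\frac1r(\alpha,\overline{\beta-\alpha},\gamma)$ in $U,B,C$), and deduce $\alpha\neq\beta$.

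The gap is the ``main step''. There you drop the criterion you stated correctly (every nonzero class in the half-open parallelepiped has coordinate sum $>1$) for a misreading of the terminal lemma, and in each case you test the wrong cone. The normal form ``one entry is $1$ and the other two sum to $r$'' describes only the representative with coordinate sum exactly $r+1$. For an arbitrary triple, the terminal lemma says only that \emph{some} pair sums to $r$ and the third entry is a unit. Your reformulation ``sums equal $r+1$ or $2r-1$'' is also false: for $\frac15(1,2,3)$ and $k=2$ one gets $(2,4,1)$, with sum $7$.

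Concretely, the two cases fail as follows.
\begin{itemize}
\item Case $\gamma=1$: take $r=5$ and $(\alpha,\beta,\gamma)=(2,3,1)$. The $ACU$ triple is $(4,3,1)$, which is terminal via the pair $4+1$. So $\overline{\alpha-\beta}+\beta\equiv0$ is not forced, and $ACU$ gives no contradiction.
\item Case $\alpha=1$: the $ACU$ triple $(\gamma+1,\beta,\gamma)$ always contains the pair $\beta+\gamma=r$. Hence $ACU$ is terminal whenever $\gcd(\gamma+1,r)=1$, e.g.\ $r=5$ with $(1,2,3)\mapsto(4,2,3)$. Your claim that ``the only possibility is $\beta=1$'' is false, and $\beta=1$ was already excluded by $\alpha\neq\beta$.
\end{itemize}
The case $\beta=1$, tested with $BCU$, fails in the same symmetric way. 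In these examples the cone you test really is terminal, so no choice of unit $k$ can rescue the argument. The contradiction lives in the other cone: there $BCU$ has triples $(2,1,1)$ and $(1,1,3)$, with coordinate sums $\le r$.

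The missing idea is to choose the cone by comparing $\alpha$ and $\beta$, i.e.\ by which of $ACU$, $BCU$ contains $w$. Then no case analysis on normal forms is needed. If $\alpha>\beta$, the point $\frac1r\bigl((\alpha-\beta)A+\beta U+\gamma C\bigr)$ already has all coordinates in $(0,1)$, with no reduction. Its coordinate sum is $\frac{r+1-\beta}{r}\le1$, which contradicts terminality of $ACU$. If $\beta>\alpha$, the same argument works in $BCU$.

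This is exactly the paper's proof. It writes $w=aA+bB+cC$ with $a\ge b$, so $w=bU+(a-b)A+cC\in ACU$. Then $|w|_{ACU}=1+\frac1r-b\le1$, even though $w$ is exceptional over $X_{ACU}$.
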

\begin{proof}
It suffices to show $ABC$ is regular. Assume $\mult ABC=r>1$, then it is easy to check that $\mult ACU=\mult BCU=r$ by looking at the determinants. Let $w=aA+bB+cC$ be the mld-ray of $ABC$, then we have $a+b+c=1+\frac{1}{r}$ and $ra,rb,rc\in\Zz_{>0}$.

Without lose of generality, we may assume $a\ge b$ so $w\in ACU=\tau$. Now we can write
\[
w=bU+(a-b)A+cC,
\]
hence we have the log discrepancy $|w|_{\tau}=b+(a-b)+c=1+\frac{1}{r}-b\le1$. But $w$ (as a divisorial valuation) is exceptional over $X_{\tau}$, which contradicts the terminality of $X_\tau$.
\end{proof}

\begin{lem}\label{lem: terminal star subdiv of sing cone inserts mld ray}
Suppose a star subdivision of a singular terminal cone $\sigma$ at a
primitive interior ray $u$ is terminal. Then $u$ is exactly the
mld-ray $w$. 
\end{lem}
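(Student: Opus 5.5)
The idea is to imitate the proof of Lemma \ref{lem: terminal curve blow-down}: use the mld-ray $w$ of $\sigma$ as a test valuation on the subcone of the subdivision that contains it. Write $\sigma=\cone{e_1,e_2,e_3}$ of type $\frac1r(1,a,r-a)$, $r>1$, and let $u=u_1e_1+u_2e_2+u_3e_3$ with all $u_i>0$ (since $u$ is interior) and $u_i\in\frac1r\Zz$. The star subdivision consists of the three cones $\tau_1=\cone{u,e_2,e_3}$, $\tau_2=\cone{e_1,u,e_3}$, $\tau_3=\cone{e_1,e_2,u}$. Suppose $u\neq w$. We will find some $\tau_j$ and a lattice point of $N$ other than $u,e_1,e_2,e_3$ in $\tau_j$ whose $\tau_j$-log discrepancy is $\le 1$. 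This contradicts terminality.

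\textbf{Step 1.} Since $w$ is the unique lattice point with $|w|_\sigma=1+\frac1r$ and $w$ is interior, we get $|u|_\sigma\ge 1+\frac1r$, with equality only if $u=w$. Assuming $u\ne w$, we therefore have $|u|_\sigma\ge 1+\frac2r>1+\frac1r$.

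\textbf{Step 2.} The point $w$ lies in some $\tau_j$. Renumbering, suppose $w\in\tau_1=\cone{u,e_2,e_3}$. Then $w=tu+se_2+s'e_3$ with $t,s,s'\ge0$, and since $w$ has positive $e_1$-coordinate, $t=w_1/u_1>0$. The $\tau_1$-log discrepancy is the linear function taking value $1$ on $u,e_2,e_3$, so
\[
|w|_{\tau_1}=t+s+s'=|w|_\sigma-t(|u|_\sigma-1)=1+\tfrac1r-t(|u|_\sigma-1).
\]
This is $\le 1$ as soon as $t(|u|_\sigma-1)\ge\frac1r$.

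\textbf{Step 3.} Compare $w$ with $u$. Here $w_1=\frac1r$, and since $u_1\in\frac1r\Zz_{>0}$ we have $u_1=\frac{k}{r}$ for some $k\ge1$. Hence $t=\frac1k$, and we need $|u|_\sigma-1\ge\frac kr=u_1$, that is, $u_2+u_3\ge 1$. If instead $u_2+u_3<1$, then $u_2,u_3<1$. Subtracting integer vectors, $u$ differs from a multiple of $w$ modulo $\Zz^3$: explicitly $u\equiv kw \pmod{\Zz^3}$, so $u_2=\{ka/r\}$ and $u_3=\{k(r-a)/r\}$ (fractional parts). These two sum to $0$ or $1$. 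They cannot sum to $0$, since $u_2,u_3>0$, so $u_2+u_3=1$ anyway. Thus $|w|_{\tau_1}\le1$.

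\textbf{Step 4.} We must also check that $w$ is genuinely exceptional over $X_{\tau_1}$, i.e.\ $w\ne u$ and $w$ is not a multiple of $e_2$ or $e_3$. This holds because $w$ is interior to $\sigma$ and $w\neq u$. Hence $X_{\tau_1}$ is not terminal, a contradiction, and so $u=w$.

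The main obstacle is Step 3, the case analysis on which $\tau_j$ contains $w$. When $w$ lies in $\tau_2$ or $\tau_3$, the same argument applies with coordinates $2$ or $3$ in place of $1$. Then $w_2=\frac ar$ and $t=w_2/u_2$, and the residue argument has to be redone: one uses $u\equiv kw$ and the fact that $\{kb/r\}$-type sums over the other two coordinates are at least $1$, unless $k\equiv 1\pmod r$. The case $k\equiv1\pmod r$, i.e.\ $u=w+$(nonnegative integer vector), has to be handled separately: there $w\in\tau_j$ directly gives $|w|_{\tau_j}=1+\frac1r-t(|u|_\sigma-1)$ with $|u|_\sigma\ge 2+\frac1r$. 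I would first try to handle all cases uniformly by choosing the coordinate $i$ that minimises $w_i/u_i$. This gives $w\in\tau_i$ automatically, since $w-(w_i/u_i)u$ is a nonnegative combination of the other $e_j$. The remaining work is then just this single inequality.
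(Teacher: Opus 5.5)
\textbf{There is a genuine gap: the key inequality is proved only for $i=1$.} Your framework is the paper's. You pick $i$ minimising $w_j/u_j$, so that $w\in\tau_i$, and use $|w|_{\tau_i}=1+\frac1r-\frac{w_i}{u_i}(|u|_\sigma-1)$. The desired contradiction is then equivalent to $u_i\le rw_i(|u|_\sigma-1)$, and your Step 3 proves this only for $i=1$.

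The renumbering in Step 2 is not free, because $w=\frac1r(1,a,r-a)$ treats the first coordinate differently from the others. Moreover, $w$ can lie in $\tau_2$ alone. For $r=5$, $a=2$ and $u=2w+e_2-e_3=(\frac25,\frac95,\frac15)$ (primitive and interior), one has $w_2/u_2=\frac29<w_1/u_1=\frac12<w_3/u_3=3$.

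Your sketch for $i=2,3$ does not close this. Suppose you aim, as in Step 3, to show that the two coordinates of $u$ other than $u_i$ sum to at least $1$. This fails in the same example: $u_1+u_3=\frac35$, although $k=2\not\equiv 1\pmod 5$. So the split into \emph{generic $k$} versus \emph{$k\equiv 1$} does not cover all cases. For $i=2$ what you actually need is $u_2\le a(|u|_\sigma-1)$, and the factor $a=rw_2$ has to be used genuinely.

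The missing step is exactly the content of the paper's proof.
\begin{itemize}
\item If $u\in\Zz^3$, then $u_i\le |u|_\sigma-2$ and $rw_i\ge 1$, so there is nothing to do.
\item Otherwise write $u=\bar u+n$, with $n\in\Zz^3_{\ge 0}$ and $\bar u$ the componentwise fractional part. By your congruence $u\equiv kw\pmod{\Zz^3}$, we get $\bar u_1=\frac kr$ with $0<k<r$, $\bar u_2=\{\frac{ka}{r}\}$ and $\bar u_3=\{\frac{k(r-a)}{r}\}$. All three lie in $(0,1)$ since $\gcd(a,r)=1$.
\item Then $\bar u_2+\bar u_3=1$, so $|u|_\sigma-1=\bar u_1+n_1+n_2+n_3$.
\item Also $\bar u_2=\{\frac{ka}{r}\}\le \frac{ka}{r}=a\bar u_1$. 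Hence
\[
u_2=\bar u_2+n_2\le a\bar u_1+a n_2\le a(|u|_\sigma-1),
\]
and in the same way $u_3\le (r-a)(|u|_\sigma-1)$.
\end{itemize}
With this inequality for every $i$, your Steps 2 and 4 complete the proof, and your Step 3 becomes the $i=1$ instance. Step 1 is not needed at all.
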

\begin{proof}
We may assume $\sigma=\cone{e_1,e_2,e_3}$ and $w=(w_1,w_2,w_3)=\frac{1}{r}(1,a,r-a)$ as in \ref{subsec: terminal fans}. Write $u=(u_1,u_2,u_3)\in N$ and notice that $u_j>0$, let 
\[
t=\frac{w_i}{u_i}=\min_j\{\frac{w_j}{u_j}\}> 0, 
\]
then $w$ is contained in the cone $\tau$ generated by $\{e_1,e_2,e_3,u\}\backslash\{e_i\}$ and we have
\[
w=(w_1-tu_1)e_1+(w_2-tu_2)e_2+(w_3-tu_3)e_3+tu,
\]
where one coefficient $w_i-tu_i=0$ and the others $\ge 0$.
Hence we can compute the log discrepancy of $w$ over $X_\tau$,
\[
|w|_\tau=\sum_j(w_j-tu_j)+t=1+\frac{1}{r}-(u_1+u_2+u_3-1)t.
\]
If $w\neq u$, then $w$ (as a divisorial valuation) is exceptional over $X_\tau$ and we must have $|w|_\tau>1$ as $X_\tau$ is terminal, equivalent to $u_i>rw_i(u_1+u_2+u_3-1)$. We show this is impossible.

If $u_1\in\Zz$, then $u_2, u_3\in\Zz$ as well. We have $u_i\le u_1+u_2+u_3-1$ and $w_i\ge \frac{1}{r}$, which implies $u_i\le rw_i(u_1+u_2+u_3-1)$.

If $u_1\notin\Zz$, then we can write $u=\bar u+\sum_jn_je_j$, where $0<\bar u_j<1,~n_j\ge 0$. Notice that $N=\Zz^3+\Zz\cdot\frac{1}{r}(1,a,r-a)$, we also have
\[
r\bar u_2\equiv (r\bar u_1)a,\ \  \   r\bar u_3\equiv r\bar u_1(r-a), \mod r
\]
In particular, we have $\bar u_2+\bar u_3=1,\ \bar u_2\le a\bar u_1,\ \bar u_3\le (r-a)\bar u_1$. Therefore for $u_1$ we have
\[u_1=rw_1(u_1+\bar u_2+\bar u_3-1)\le rw_1(u_1+u_2+u_3-1);\]
For $u_2$ we have
\begin{align*}
u_2=&\bar u_2+n_2\le a\bar u_1+an_2\le a(\bar u_1+\bar u_2+\bar u_3-1+n_2)\\
   \le& a(\bar u_1+\bar u_2+\bar u_3-1+ n_1+n_2+n_3)=rw_2(u_1+u_2+u_3);
\end{align*}
Similarly for $u_3$ we have 
\[
u_3\le (r-a)(\bar u_1+\bar u_2+\bar u_3-1+ n_1+n_2+n_3)=rw_3(u_1+u_2+u_3).
\]
Thus $u_i\le rw_i(u_1+u_2+u_3-1)$ for any $i$.
\end{proof}

\begin{lem}\label{lem: terminal point blow-down to a smooth point}
Let $W$ be a primitive vector/ray in the interior of a regular cone $ABC$. Then
\[
ABW\ ,\ AWC\ ,\ WBC
\]
are terminal if and only if we can write $W=aA+bB+C$, where $a,b>0$ and $\gcd(a,b)=1$, possibly after switching $A,B,C$.
\end{lem}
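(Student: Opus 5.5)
We work in the basis $A,B,C$ of $N$, writing $W=xA+yB+zC$ with $x,y,z\in\Zz_{>0}$ and $\gcd(x,y,z)=1$ since $W$ is primitive. The three cones have multiplicities $z,y,x$ respectively: $\det(A,B,W)=z$, and similarly for the others. Each cone $ABW$ has a regular two-dimensional face $AB$. We will use the standard criterion for a simplicial three-dimensional cone. Such a cone $\langle v_1,v_2,v_3\rangle$ is terminal if and only if the only lattice points in the half-open ``box'' region $\{\sum t_iv_i:\ 0\le t_i,\ \sum t_i\le 1\}$ are $0$ and the $v_i$. Equivalently, every nonzero non-generator lattice point $v=\sum t_iv_i$ with $t_i\ge0$ satisfies $\sum t_i>1$. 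This is the log discrepancy criterion already used in the proofs of Lemma \ref{lem: terminal curve blow-down} and Lemma \ref{lem: terminal star subdiv of sing cone inserts mld ray}.

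For the ``if'' direction, take $W=aA+bB+C$ with $\gcd(a,b)=1$. The cone $ABW$ then has multiplicity $1$ and is regular. For $AWC$ (multiplicity $b$), note that $B=\frac{1}{b}(W-aA-C)$. The lattice is therefore generated by $A,W,C$ and $\frac1b(-aA+W-C)$. Reducing the coefficients mod $b$ gives type $\frac1b(b-a \bmod b,\,1,\,b-1)$ in the coordinates $(A,W,C)$. Its second and third entries sum to $b$, and the first is coprime to $b$ since $\gcd(a,b)=1$. Up to reordering this is the shape $\frac1r(1,s,r-s)$ with generator $W$ weighted $1$, so by the terminal lemma the cone is terminal. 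The cone $WBC$ is handled symmetrically with index $a$. The cases $a=1$ or $b=1$ give regular cones and are automatic.

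For the ``only if'' direction, assume all three cones are terminal and $x,y,z>0$. We aim to show that some coefficient equals $1$, say $z=1$, after which the gcd condition follows. The strategy is to produce a lattice point violating the box criterion whenever $\min(x,y,z)\ge2$. Take the lattice point $P=A+B+C$ and suppose, without loss of generality, $z\le x,y$, so that $z=\min$. Expressing $P$ in the cone $ABW$, we get
\[
P=\left(1-\tfrac{x}{z}\right)A+\left(1-\tfrac{y}{z}\right)B+\tfrac1z W,
\]
but the first two coefficients are $\le 0$, so $P$ is not in that cone. We should instead locate which of the three subcones contains $P$ and compute its log discrepancy there. Using the minimum ratio exactly as in the proof of Lemma \ref{lem: terminal star subdiv of sing cone inserts mld ray}, with $t=\min(1/x,1/y,1/z)=1/\max(x,y,z)$, the point $P$ lies in the subcone omitting the generator attaining the max. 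There its coefficient sum is $3-t(x+y+z-1)$. Terminality forces $3-t(x+y+z-1)>1$. If $x\ge y\ge z$ this gives $x+y+z-1<2x$, that is, $y+z\le x$.

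The remaining job is to get a contradiction or to force $z=1$ from this. Applying the same minimum-ratio argument to other small points, namely $A+B+C-kW$ shifted appropriately, or $\lceil\cdot\rceil$ combinations such as $A+\lceil\cdot\rceil$, should give the missing inequalities. Failing that, I would argue directly via the terminal lemma: the cone $AWC$ of index $y$ must be of type $\frac1y(1,s,y-s)$ in some ordering. Computing its type from $B=\frac1y(W-xA-zC)$ gives the type $\frac1y(-x,1,-z)$ in the coordinates $(A,W,C)$. The terminal lemma requires that two entries sum to $0$ mod $y$ and that the remaining one is a unit. Since $W$ has entry $1$, either $x+z\equiv0$, or $-x+1\equiv0$, or $-z+1\equiv0 \pmod y$. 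Doing the same for $WBC$ (index $x$) and $ABW$ (index $z$) gives congruence conditions. Combining these with $y+z\le x$ (where $x$ is the max), the main obstacle is this case analysis. The key step is to show that for the smallest coefficient $z$, $z\ge2$ is incompatible. The condition for $ABW$ reads: $x+y\equiv0$, $x\equiv1$, or $y\equiv1 \pmod z$. I expect this number-theoretic elimination to be the hardest part, and I would handle it by plugging in the box-criterion points, for example $\lceil x/z\rceil A+\lceil y/z\rceil B+C-W$-type points, in the cone of index $z$.
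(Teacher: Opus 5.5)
Your ``if'' direction is essentially correct. In the coordinates $(A,W,C)$, the cone $AWC$ has type $\frac{1}{b}(-a,1,-1)$. The entries $1$ and $b-1$ sum to $b$, and $-a$ is a unit, which is the invariant form of the terminal lemma. (To reach $\frac{1}{r}(1,s,r-s)$ literally you must rescale the generator by the inverse of $-a$ modulo $b$; reordering alone is not enough.)

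The genuine gap is the ``only if'' direction. You obtain $y+z\le x$ from the point $A+B+C$ and write down the congruence conditions for the three cones, but you never carry out the elimination. Moreover, the place where you propose to find the contradiction cannot supply it. You point to the index-$z$ cone $ABW$, via its congruence or via box points in it. Take $W=5A+3B+2C$: here $ABW$ and $WBC$ have types $\frac{1}{2}(1,1,1)$ and $\frac{1}{5}(1,2,3)$, both terminal, and $y+z\le x$ holds. Only $AWC$, of type $\frac{1}{3}(1,1,1)$, fails. So no lattice point of $ABW$ will ever violate the box criterion here.

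The missing step is short, and it is exactly what the paper does.
\begin{enumerate}
\item Order $x\ge y\ge z$ and assume $z\ge 2$. Terminality forces the two-dimensional faces $AW,BW,CW$ to be regular. Hence $x,y,z$ are pairwise coprime, so $x>y>z\ge 2$.
\item The largest-index cone $WBC$ has type $\frac{1}{x}(1,-y,-z)$ in $(W,B,C)$. The options $y\equiv 1$ and $z\equiv 1\pmod{x}$ are excluded by $1<z<y<x$. So $x\mid y+z$, and $y+z<2x$ gives $x=y+z$.
\item Then $AWC$ has type $\frac{1}{y}(-x,1,-z)=\frac{1}{y}(-z,1,-z)$. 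Since $z\not\equiv 1\pmod{y}$, the only option is $y\mid 2z$. Thus $y=2z$, contradicting $\gcd(y,z)=1$.
\item Hence $z=1$, and $\gcd(x,y)=1$ follows from regularity of the face $CW$.
\end{enumerate}
Until this elimination (or an explicit violating lattice point in $AWC$) is supplied, your proposal proves only one implication of the lemma.
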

\begin{proof}
Let $W=aA+bB+cC$, we may assume $0<c\le b\le a$. Regularity of $AW$ implies $\gcd(b,c)=1$, similarly we have $\gcd(a,b)=\gcd(a,c)=1$. If $c>1$, then $\{ABW,AWC,WBC\}$ are all singular terminal cones with index $c,b,a$. Since $A=\frac{1}{a}(-bB-cC+W)$, we have 
\[
N=\Zz B+\Zz C+\Zz W+\Zz A=\Zz B+\Zz C+\Zz W+\Zz\cdot\frac{1}{a}(bB+cC+(a-1)W).
\]
Then the terminal lemma (see \ref{subsec: terminal fans}) implies that two of $\{b,c,a-1\}$ sum to $a$, which could only be $b+c=a$ since $b\ge c>1$. Since $B=\frac{1}{b}(-aA-cC+W)$, we have
\[
N=\Zz A+\Zz C+\Zz W+\Zz\cdot\frac{1}{b}(cA+cC+(b-1)W).
\]
Again the terminal lemma implies that $2c=b$, so $c=\gcd(c,b)=1$, a contradiction.

Therefore $c=1$ and $\gcd(a,b)=1$. Conversely, if we have this assumption, the same argument above show that $ABW$ is regular, $AWC,WBC$ is terminal of type $\frac{1}{b}(a,1,b-1)$ and $\frac{1}{a}(b,1,a-1)$. (One can multiply them to the standard forms in \ref{subsec: terminal fans} as $\gcd(a,b)=1$.)
\end{proof}

\section{Smooth lifts}\label{sec: sm lifts}
In this section we assume $X_{\Sigma}\dasharrow X_{\Sigma'}$ is a step of the terminal $K$-MMP classified in \ref{subsec: toric mmp}. We will show that the induced birational map between their canonical resolutions can be decomposed into smooth blow-downs and smooth $K$-flops.

\begin{prop}\label{prop: lifting mmp}
Let $f: X_{\Sigma}\dasharrow X_{\Sigma'}$ be a step of terminal toric $K$-MMP as in \ref{subsec: toric mmp}. Then there is a sequence
\[
X_{\Sigma^c}=X_0\dasharrow X_1\dasharrow\cdots\dasharrow X_n=X_{(\Sigma')^c}
\]
such that each $X_i\dasharrow X_{i+1}$ is one of the following (see \ref{subsec: toric blow-down and flop})
\begin{itemize}
    \item a smooth curve blow-down.
    \item a smooth point blow-down.
    \item a smooth $K$-flop.
\end{itemize}
Notice that it could happen that $X_{\Sigma^c}=X_0=X_{(\Sigma')^c}$.
\end{prop}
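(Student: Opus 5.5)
The plan is to go through the classification of MMP steps in \ref{subsec: toric mmp} case by case. Throughout I will use two properties from Definition \ref{def: canonical resolution}: taking $(\cdot)^c$ commutes with restriction to subfans, and it fixes regular cones. Together they mean that $\Sigma^c$ and $(\Sigma')^c$ coincide outside the cones where $\Sigma$ and $\Sigma'$ differ. So every case reduces to comparing two regular subdivisions of one fixed convex cone that agree on its boundary.

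\textbf{Case 1.} By Lemma \ref{lem: terminal curve blow-down}, all cones involved are regular and $U=A+B$. Hence $\Sigma^c\to(\Sigma')^c$ is literally the same smooth curve blow-down, and this case is immediate.

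\textbf{Case 2(1).} By Lemma \ref{lem: terminal star subdiv of sing cone inserts mld ray}, $u$ is the mld-ray of $\sigma$. So $\Sigma$ is an intermediate fan in the construction $\Sigma'\rightsquigarrow(\Sigma')^c$, and property (4) gives $\Sigma^c=(\Sigma')^c$. The sequence is empty.

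\textbf{Case 2(2), the weighted $(a,b,1)$-blow-up of a regular cone $ABC$.} Here I would argue by induction on $a+b$; the base case $a=b=1$ is the ordinary point blow-up. Assume $a>b$. First blow up the curve $AB$ by inserting $A+B$; this is a smooth curve blow-up, and it produces regular cones $\cone{A,A+B,C}$ and $\cone{A+B,B,C}$. In the basis $A,A+B,C$ we have $W=(a-b)A+b(A+B)+C$, so $W$ is an $(a-b,b,1)$-blow-up of a smooth point. By induction, the canonical resolution of that star subdivision is reached by smooth blow-ups and flops.

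The remaining task is to compare the resulting regular fan with $\Sigma^c$ restricted to $ABC$. That fan is the canonical resolution of $ABW$ (regular), of $AWC$ (type $\frac1b(a,1,b-1)$) and of $WBC$ (type $\frac1a(b,1,a-1)$). The two fans differ by the presence of the ray $A+B$ and by which diagonals appear in the planes through $C$. I would compute the mld-rays explicitly. I expect them to be of the form $a'A+b'B+C$, generated by a Euclidean-algorithm pattern, together with rays such as $A+B+C$. I would then show that $A+B$ can be removed by a chain of smooth flops, using relations of the form $X+Y=Z+T$ among consecutive rays, followed by one smooth curve blow-down. Reversing all arrows then gives the required sequence from $\Sigma^c$ to $(\Sigma')^c$.

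\textbf{Case 3, the flips $\mathrm{I}(a,b)$ and $\mathrm{II}(r,a)$.} Both fans subdivide $\Omega=\cone{A,B,C,D}$. On one side $\Sigma^c$ restricts to $ABC$ (regular) together with the resolution of $ABD$, which has index $r$. On the other side it is the union of the resolutions of $ACD$ and $BCD$, of indices $b$ and $a$. I would compute the mld-ray of $ABD$ explicitly in terms of $A,B,C$; for type I it should be something like $w=(D+\alpha A+\beta B)/r$, which lies in the plane-section picture of $\Omega$. The next step is to check that inserting it reduces the problem either to a flip of the same family with smaller $r$, or to a configuration where a single smooth flop $A+B=C+D'$ or a smooth blow-down suffices. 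The induction is then on $r$, or on $G$ of the local fan.

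The main obstacle is the bookkeeping in Case 2(2) and Case 3. There one has to show that two explicit regular triangulations of the same cone, each containing some extra rays, are connected by smooth flops and smooth blow-ups. To get a clean inductive statement, I would first try to find a uniform description of the rays of the canonical resolution of $\frac1r(1,a,r-a)$, namely the points $w_k=\frac1r(k,\overline{ka},\overline{k(r-a)})$ that appear successively.
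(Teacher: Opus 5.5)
Your Cases 1 and 2(1) agree with the paper. Your Case 2(2) argument, however, has a structural gap that bookkeeping cannot repair.

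\textbf{Why Case 2(2) fails.} Smooth flops do not change the set of rays, and smooth blow-downs only delete rays. Hence in any admissible chain $X_{\Sigma^c}\dasharrow\cdots\dasharrow X_{(\Sigma')^c}$, every intermediate fan has all its rays in $\Sigma^c(1)$. The ray $A+B$ lies in the relative interior of the $2$-cone $AB$ of $\Sigma$. The canonical resolution only inserts rays in the interiors of singular maximal cones, so $A+B\notin\Sigma^c(1)$. Your route passes through $(\Sigma')^c$ with $A+B$ inserted, and through the canonical resolution of its $(a-b,b,1)$-blow-up; both contain $A+B$. So your chain necessarily contains a blow-up.

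Concretely, your final step ``remove $A+B$ by one smooth curve blow-down'' lands on $\Sigma^c$. After you reverse all arrows, it becomes the first step out of $X_{\Sigma^c}$, which is a curve blow-\emph{up}. What you would get is a weak factorization: one blow-up, then blow-downs and flops. That is not the statement, and avoiding blow-ups is the whole point of Koll\'ar's question. (In general, inserting $A+B$ also subdivides the maximal cone on the other side of $AB$, so the comparison is not local to $\cone{A,B,C}$ as you claimed.)

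\textbf{The missing idea for Case 2(2).} Every auxiliary ray must be taken inside $\Sigma^c$. The paper inserts the mld-ray $V$ of the index-$b$ cone $\cone{A,W,C}$, which leaves the canonical resolution unchanged. One finds $V=pA+qB+C$ with $p=(aq+1)/b<a$ and $q<b$. The relation $pW+(a-p)C=aV+B$ then gives a flip $\{CVW,BCW\}\rightsquigarrow\{BCV,BVW\}$ of type $\mathrm{I}(a-p,p)$, handled by Lemma~\ref{lem: lift I flip}. After it, $W=(a-p)A+(b-q)B+V$ is a weighted blow-up of the regular cone $\cone{A,B,V}$, and $V$ is a weighted $(p,q,1)$-blow-up of $\cone{A,B,C}$. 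Both have smaller weight sum, so induction applies. When $a=1$, the identity $W=V+B$ makes the removal of $W$ a smooth curve blow-down.

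\textbf{Case 3.} Your outline is in the right spirit but does not yet close the induction. After inserting the mld-ray $V$ of $\cone{A,B,D}$, two moves are needed, not one:
\begin{enumerate}
\item First, the change $AB\rightsquigarrow CV$. For type I this is a smooth flop, since $A+B=C+V$. For type II it is a flip $\mathrm{II}(s,p)$ or a smooth flop.
\item Second, a flip of strictly smaller index, namely $\mathrm{I}(b-a,a)$ or $\mathrm{II}(k,a-p)$, or a smooth curve blow-down in the degenerate cases.
\end{enumerate}
The induction terminates only because $V$ is then the mld-ray of the remaining singular cone of $\Sigma'$. For type I this is $bV=(b-a)A+aC+D$ in $\cone{A,C,D}$; for type II it is $aV=(a-p)B+C+pD$ in $\cone{B,C,D}$. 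This makes the resulting fan an intermediate stage of $\Sigma'\rightsquigarrow(\Sigma')^c$, and property (4) of Definition~\ref{def: canonical resolution} then identifies the canonical resolutions. You need to supply this identification; without it, ``reduces to a smaller flip'' does not land on $(\Sigma')^c$.
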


\begin{lem}\label{lem: lift curve blow-down and pt blow-down to sing pt}
Proposition \ref{prop: lifting mmp} holds in Case 1 and Case 2 (1) of \ref{subsec: toric mmp}.
\end{lem}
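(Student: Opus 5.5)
In both cases I will show that $\Sigma^c$ and $(\Sigma')^c$ agree after at most one smooth blow-down; most of the work is already done by Case 1 and Case 2 (1) together with the properties in Definition \ref{def: canonical resolution}.

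\textbf{Case 1.} Here $\Sigma=\Sigma_i$ is obtained from $\Sigma'=\Sigma_{i+1}$ by inserting $U=A+B$ into the regular 2-cone $AB$. By Lemma \ref{lem: terminal curve blow-down} every maximal cone adjacent to $AB$ in $\Sigma'$ is regular, and so is every cone of $\Sigma$ containing $U$. The canonical resolution fixes regular cones and commutes with restriction to subfans (Definition \ref{def: canonical resolution} (1), (3)). Outside the star of $AB$ the fans $\Sigma$ and $\Sigma'$ coincide, so their canonical resolutions coincide there. Inside the star, all cones of both fans are regular and unchanged by the resolution. Hence $\Sigma^c$ is exactly the subdivision of $(\Sigma')^c$ obtained by inserting $A+B$ into $AB$, with all adjacent maximal cones regular. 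This is by definition a smooth curve blow-down $X_{\Sigma^c}\to X_{(\Sigma')^c}$, so $n=1$.

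\textbf{Case 2 (1).} Here $\Sigma$ is the star subdivision of $\Sigma'$ at an interior ray $u$ of a singular terminal maximal cone $\sigma$. By Lemma \ref{lem: terminal star subdiv of sing cone inserts mld ray}, $u$ is the mld-ray $w$ of $\sigma$. So $\Sigma$ is precisely the fan produced by the first step of the construction $\Sigma'\rightsquigarrow(\Sigma')^c$ applied at $\sigma$, and this step is legitimate by Lemma \ref{lem: first insertion}. By Definition \ref{def: canonical resolution} (4), a fan appearing in an intermediate step has the same canonical resolution, so $\Sigma^c=(\Sigma')^c$. Thus the sequence is trivial ($n=0$), which is exactly the situation allowed by the remark at the end of Proposition \ref{prop: lifting mmp}.

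The only point needing care is the compatibility claim in Case 1: that the canonical resolution of $\Sigma$ restricted to the star of $U$ is the star of $U$ itself. I would check it by applying Definition \ref{def: canonical resolution} (3) to the subfan consisting of the cones containing $U$ together with their faces, all of which are regular. The two descriptions then glue along common faces because the canonical insertions happen only in interiors of singular maximal cones. Everything else is bookkeeping.
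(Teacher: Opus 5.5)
Your proof is correct and follows essentially the same route as the paper. In Case 1, all cones around $AB$ and $U$ are regular, so the canonical resolution leaves them untouched and the smooth curve blow-down lifts verbatim to $X_{\Sigma^c}\to X_{(\Sigma')^c}$. In Case 2 (1), the inserted ray is the mld-ray, so $\Sigma$ is an intermediate step of $\Sigma'\rightsquigarrow(\Sigma')^c$, and Definition \ref{def: canonical resolution}(4) gives $\Sigma^c=(\Sigma')^c$.
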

\begin{proof}
In Case 1, $\Sigma$ is a star subdivision of $\Sigma'$ by inserting $U=A+B$ in a regular 2-dimensional cone $AB$, dividing every adjacent maximal cone to two cones, all of them are regular. Therefore no singular cones are changed and no extra singular cones are created. By the construction of the canonical resolution and the properties in Definition \ref{def: canonical resolution}, we see $\Sigma^c$ is exactly the same star subdivision of $(\Sigma')^c$ around $AB$. In other words, the smooth curve-blown $X_\Sigma\to X_{\Sigma'}$ lifts to $X_{\Sigma^c}\to X_{(\Sigma')^c}$.

In Case 2 (1), $\Sigma$ is a star subdivision of $\Sigma'$ by inserting the mld-ray $u$ in a singular cone $\sigma'$. This can be viewed as a first step of constructing the canonical resolution of $\Sigma'$, hence by Definition \ref{def: canonical resolution}(4) we know $\Sigma^c=(\Sigma')^c$ and $X_{\Sigma^c}=X_{(\Sigma')^c}$. 
\end{proof}

We prove this by strong induction on $r$, simultaneously for the two
families. Every recursive flip below has smaller index. All intermediate
singular fans are terminal, so that is defined on them. The compatibility
in  identifies the smooth fans at the interfaces.

\begin{lem}\label{lem: lift I flip}
Proposition \ref{prop: lifting mmp} holds in Case 3, Family (I) of \ref{subsec: toric mmp}.    
\end{lem}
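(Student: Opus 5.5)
The plan is strong induction on the index $r=a+b$. Write $D=aA+bB-rC$ with $ABC$ regular and $\gcd(a,b)=1$, and use the symmetry $A\leftrightarrow B$ to assume $a\ge b$. Only the cones inside $\Omega=\cone{A,B,C,D}$ change; everything else is fixed by Definition \ref{def: canonical resolution}(1),(3). So it suffices to connect the canonical resolution of $\{ABC,ABD\}$ with that of $\{ACD,BCD\}$ by smooth blow-downs and smooth $K$-flops, working inside $\Omega$. The key observation I would verify first is that the mld-ray of the index-$r$ cone $ABD$ is
\[
w=A+B-C=\tfrac{1}{r}(bA+aB+D).
\]
It is in $N$, it lies in the interior of $ABD$, and its coefficient sum is $\frac{a+b+1}{r}=1+\frac1r$. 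By Definition \ref{def: canonical resolution}(4), we may therefore first insert $w$ into $ABD$ without changing the canonical resolution. This produces $ABw$ (regular), $AwD$ (index $a$) and $BwD$ (index $b$), as in Lemma \ref{lem: first insertion}.

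Next, $ABC$ and $ABw$ are regular with $A+B=C+w$. So replacing $\{ABC,ABw\}$ by $\{ACw,BCw\}$ is a smooth $K$-flop in the sense of \ref{subsec: toric blow-down and flop}, and all cones involved are regular, so it does not interact with the canonical resolution. We now have the four cones $ACw,AwD,BCw,BwD$. Substituting $B=w-A+C$ and $A=w-B+C$ into $D$ gives the two relations
\[
aC+D=(a-b)A+bw,\qquad bC+D+(a-b)B=aw .
\]

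On the $A$-side with $a>b$, the cone $ACw$ is regular and $D=(a-b)A+bw-aC$. This is exactly a Family I$(a-b,b)$ flip of index $a<r$, replacing $\{ACw,AwD\}$ by $\{ACD,wCD\}$. I would check, as in \ref{subsec: toric mmp}, that all intermediate cones are terminal, so the canonical resolutions are defined. The induction hypothesis then lifts this flip to smooth operations between the canonical resolutions, and the compatibility in Definition \ref{def: canonical resolution}(3) glues the local pieces along the unchanged faces. On the $B$-side, the second relation says $w=\frac1a\bigl((a-b)B+bC+D\bigr)$ lies in the interior of $BCD$ with coefficient sum $1+\frac1a$. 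So $w$ is the mld-ray of the index-$a$ cone $BCD$, and the cones $BCw,BwD,wCD$ form its first canonical insertion. By Definition \ref{def: canonical resolution}(4), this resolves to the same smooth fan as $BCD$. Combining both sides yields $(\{ACD,BCD\})^c$.

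The degenerate case $a=b$ forces $a=b=1$ and $r=2$, and serves as the base. There $D=A+B-2C$ and $C+D=w$, so $w$ is the mld-ray of both $ACD$ and $BCD$ after the flop. Hence the fan $\{ACw,AwD,BCw,BwD\}$ is already $(\{ACD,BCD\})^c$, and a single smooth flop suffices. The main obstacle I expect is the bookkeeping in the inductive step. I must confirm that the recursive flip really sits in Family I, or in Family II after relabelling when $a-b=1$ or $b=1$; that its terminal cones agree with the ones already present; and that performing it after the $B$-side insertion is compatible with the order-independence of the canonical resolution. This is the place to invoke the simultaneous induction with Family II, so the degenerate overlaps are covered.
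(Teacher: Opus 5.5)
Your inductive step is correct and is the paper's argument with $A$ and $B$ interchanged. The paper inserts $V=A+B-C$, flops $AB$ to $CV$, and for $b>a$ performs the $\mathrm{I}(b-a,a)$ flip on $\{BCV,BDV\}$, after which $V$ is the mld-ray of $ACD$. You instead perform the $\mathrm{I}(a-b,b)$ flip on $\{ACw,AwD\}$ and recognize $w$ as the mld-ray of $BCD$. Your recursion never leaves Family I, so no simultaneous induction with Family II is needed.

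The base case $a=b=1$, however, is wrong. There $ACD$ and $BCD$ have indices $b=1$ and $a=1$, so they are regular. Moreover $w=C+D$ lies in the relative interior of their common face $CD$, not in the interior of either cone. So $w$ is not an mld-ray of anything. Since the canonical resolution fixes regular cones, $(\{ACD,BCD\})^c=\{ACD,BCD\}$, and the fan $\{ACw,AwD,BCw,BwD\}$ is its blow-up along the curve $CD$. After the flop you therefore still need one smooth curve blow-down, removing $w$ from $CD$. This is legitimate because all four cones are regular and $w=C+D$, and it is exactly what the paper does.

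A ray count shows the blow-down is unavoidable. Flops do not change the set of rays. Inside $\Omega$, the fan $\Sigma^c$ has $r-1$ rays beyond $A,B,C,D$, while $(\Sigma')^c$ has $(a-1)+(b-1)=r-2$. So exactly one blow-down must occur.

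Your recursion $\mathrm{I}(a,b)\to\mathrm{I}(a-b,b)$ is a subtractive Euclidean algorithm that always ends at $\mathrm{I}(1,1)$. Hence the error propagates to every case: as written, your decomposition consists only of flops and cannot reach $(\Sigma')^c$. Replacing ``a single smooth flop suffices'' by ``one flop followed by the smooth blow-down of the curve $CD$'' repairs the argument.
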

\begin{proof}
We will prove this with induction on $r$. Put $V=A+B-C$. Since
\[
 rV=bA+aB+D,
\]
We see $V=\frac{1}{r}(bA+aB+D)$ is the mld-ray of $ABD$: its positive coefficients have
sum $1+1/r$, which computes the mld. Inserting it gives (see \ref{subsec: terminal fans})
\[
 ABC,ABV,ADV,BDV\ \ \text{of indices }1,1,a,b,
\]
the induced $X_{\Sigma_1}\to X_{\Sigma}$ is a first step of the canonical resolution of $\Sigma$. Therefore we have $\Sigma^c=\Sigma^c_1$.

The cones $ABC,ABV$ are regular and $A+B=C+V$, so there is a $K$-flop $X_{\Sigma_1}\dasharrow X_{\Sigma_2}$ as in \ref{subsec: toric blow-down and flop}
with
\[
AB\rightsquigarrow CV,\ \ \{ABC,ABV\}\rightsquigarrow\{ACV,BCV\}.
\] 
Since no singular cones are changed or created, Definition \ref{def: canonical resolution} implies that the smooth flop lifts to their canonical resolutions $X_{\Sigma_1^c}\dasharrow X_{\Sigma_2^c}$, so we just need to show $X_{\Sigma^c_2}\dasharrow X_{(\Sigma')^c}$ have the desired decompositions. We list so far relevant maximal cones in $\Sigma_2$ here for convenience,
\begin{equation}\label{eq: I flip Sigma_2}
 ACV,BCV,ADV,BDV.
\end{equation}

If $b>a$, working in the regular basis $(B,V,C)$, we have
\[
D=(b-a)B+aV-bC.
\]
Thus by the classification in \ref{subsec: toric blow-down and flop} and \ref{subsec: toric mmp}, we see
\[
\{BCV,BDV\}\rightsquigarrow\{CDV,BCD\},\ \ \Sigma_2\rightsquigarrow \Sigma_3
\]
is a terminal flip of type $\mathrm{I}(b-a,a)$, and the singular index of $BDV$ is $b<r$. By induction we know Proposition \ref{prop: lifting mmp} holds for $X_{\Sigma_2}\dasharrow X_{\Sigma_3}$. Therefore it suffices to show that $X_{\Sigma^c_3}=X_{(\Sigma')^c}$. 

The relevant cones in $\Sigma_3$ are
\[
 ACV,ADV,CDV,BCD,
\]
and the key observation is $V$ is mld-ray of the cone $ACD\in \Sigma'$ because
\[
 bV=(b-a)A+aC+D.
\]
Therefore $\Sigma_3$ is the subdivision of $\Sigma'$ by inserting the mld-ray $V$ in $ACD$, and by Definition \ref{def: canonical resolution} we know $X_{\Sigma^c_3}=X_{(\Sigma')^c}$. 

\begin{figure}[htbp]
\centering
\begin{tikzpicture}[x=1cm,y=1cm]

\tikzset{
  flowarrow/.style={->, dashed, line width=0.5pt},
  paneltitle/.style={font=\small}
}

\begin{scope}[shift={(0.000,0.000)}]
\coordinate (p0) at (1.32500,2.00000); 
\coordinate (p1) at (2.65000,0.00000); 
\coordinate (p2) at (0.00000,0.00000); 
\coordinate (p3) at (1.70000,-1.75000); 

\draw[fanedge] (p0)--(p1);
\draw[fanedge] (p0)--(p2);
\draw[fanedge] (p1)--(p2);
\draw[fanedge] (p1)--(p3);
\draw[fanedge] (p2)--(p3);

\node[fandot] at (p0) {};
\node[fanlabel] at (1.32500,2.15000) {$C$};
\node[fandot] at (p1) {};
\node[fanlabel] at (2.79000,-0.12000) {$B$};
\node[fandot] at (p2) {};
\node[fanlabel] at (-0.12000,-0.12000) {$A$};
\node[fandot] at (p3) {};
\node[fanlabel] at (1.56000,-1.92000) {$D$};

\node at (barycentric cs:p0=1,p1=1,p2=1) {$1$};
\node at (barycentric cs:p1=1,p2=1,p3=1) {$r$};

\end{scope}
\node[paneltitle] at (4,0) {$\Sigma$};

\begin{scope}[shift={(8.200,0.000)}]
\coordinate (p0) at (1.32500,2.00000); 
\coordinate (p1) at (2.65000,0.00000); 
\coordinate (p2) at (0.00000,0.00000); 
\coordinate (p3) at (1.70000,-1.75000); 

\draw[fanedge] (p0)--(p1);
\draw[fanedge] (p0)--(p2);
\draw[fanedge] (p0)--(p3);
\draw[fanedge] (p1)--(p3);
\draw[fanedge] (p2)--(p3);

\node[fandot] at (p0) {};
\node[fanlabel] at (1.32500,2.15000) {$C$};
\node[fandot] at (p1) {};
\node[fanlabel] at (2.79000,-0.12000) {$B$};
\node[fandot] at (p2) {};
\node[fanlabel] at (-0.12000,-0.12000) {$A$};
\node[fandot] at (p3) {};
\node[fanlabel] at (1.56000,-1.92000) {$D$};

\node at (barycentric cs:p0=1,p1=1,p3=1) {$a$};
\node at (barycentric cs:p0=1,p2=1,p3=1) {$b$};

\end{scope}
\node[paneltitle] at (7.32500,0) {$\Sigma'$};
\begin{scope}[shift={(0.000,-4.850)}]
\coordinate (p0) at (1.32500,2.00000); 
\coordinate (p1) at (2.65000,0.00000); 
\coordinate (p2) at (0.00000,0.00000); 
\coordinate (p3) at (1.70000,-1.75000); 
\coordinate (p4) at (1.32500,-0.68000); 

\draw[fanedge] (p0)--(p1);
\draw[fanedge] (p0)--(p2);
\draw[fanedge] (p1)--(p2);
\draw[fanedge] (p1)--(p3);
\draw[fanedge] (p1)--(p4);
\draw[fanedge] (p2)--(p3);
\draw[fanedge] (p2)--(p4);
\draw[fanedge] (p3)--(p4);

\node[fandot] at (p0) {};
\node[fanlabel] at (1.32500,2.15000) {$C$};
\node[fandot] at (p1) {};
\node[fanlabel] at (2.79000,-0.12000) {$B$};
\node[fandot] at (p2) {};
\node[fanlabel] at (-0.12000,-0.12000) {$A$};
\node[fandot] at (p3) {};
\node[fanlabel] at (1.56000,-1.92000) {$D$};
\node[fandot] at (p4) {};
\node[fanlabel] at (1.32500,-0.50000) {$V$};

\node[fanlabel] at (barycentric cs:p2=1,p4=1,p3=1) {$a$};
\node[fanlabel] at (barycentric cs:p1=1,p4=1,p3=1) {$b$};

\node[paneltitle] at (1.32500,-2.35000) {$\Sigma_1$};
\end{scope}

\begin{scope}[shift={(4.100,-4.850)}]
\coordinate (p0) at (1.32500,2.00000); 
\coordinate (p1) at (2.65000,0.00000); 
\coordinate (p2) at (0.00000,0.00000); 
\coordinate (p3) at (1.70000,-1.75000); 
\coordinate (p4) at (1.32500,-0.68000); 

\draw[fanedge] (p0)--(p1);
\draw[fanedge] (p0)--(p2);
\draw[fanedge] (p0)--(p4);
\draw[fanedge] (p1)--(p3);
\draw[fanedge] (p1)--(p4);
\draw[fanedge] (p2)--(p3);
\draw[fanedge] (p2)--(p4);
\draw[fanedge] (p3)--(p4);

\node[fandot] at (p0) {};
\node[fanlabel] at (1.32500,2.15000) {$C$};
\node[fandot] at (p1) {};
\node[fanlabel] at (2.79000,-0.12000) {$B$};
\node[fandot] at (p2) {};
\node[fanlabel] at (-0.12000,-0.12000) {$A$};
\node[fandot] at (p3) {};
\node[fanlabel] at (1.56000,-1.92000) {$D$};
\node[fandot] at (p4) {};
\node[fanlabel] at (1.32500,-0.50000) {$V$};

\node[fanlabel] at (barycentric cs:p0=1,p2=1,p4=1) {$1$};
\node[fanlabel] at (barycentric cs:p0=1,p1=1,p4=1) {$1$};
\node[fanlabel] at (barycentric cs:p1=1,p4=1,p3=1) {$b$};

\node[paneltitle] at (1.32500,-2.35000) {$\Sigma_2$};
\end{scope}

\begin{scope}[shift={(8.200,-4.850)}]
\coordinate (p0) at (1.32500,2.00000); 
\coordinate (p1) at (2.65000,0.00000); 
\coordinate (p2) at (0.00000,0.00000); 
\coordinate (p3) at (1.70000,-1.75000); 
\coordinate (p4) at (1.32500,-0.68000); 

\draw[fanedge] (p0)--(p1);
\draw[fanedge] (p0)--(p2);
\draw[fanedge] (p0)--(p3);
\draw[fanedge] (p0)--(p4);
\draw[fanedge] (p1)--(p3);
\draw[fanedge] (p2)--(p3);
\draw[fanedge] (p2)--(p4);
\draw[fanedge] (p3)--(p4);

\node[fandot] at (p0) {};
\node[fanlabel] at (1.32500,2.15000) {$C$};
\node[fandot] at (p1) {};
\node[fanlabel] at (2.79000,-0.12000) {$B$};
\node[fandot] at (p2) {};
\node[fanlabel] at (-0.12000,-0.12000) {$A$};
\node[fandot] at (p3) {};
\node[fanlabel] at (1.56000,-1.92000) {$D$};
\node[fandot] at (p4) {};
\node[fanlabel] at (1.32500,-0.50000) {$V$};

\node[fanlabel] at (barycentric cs:p0=1,p3=1,p4=1) {$b-a$};
\node[fanlabel] at (barycentric cs:p1=1,p0=1,p3=1) {$a$};

\node[paneltitle] at (1.32500,-2.35000) {$\Sigma_3$};
\end{scope}


\draw[flowarrow] (4.95,0.00) -- (6.20,0.00);

\draw[flowarrow] (2.325,-2) -- (2.325,-3);      
\draw[flowarrow] (3.10,-3.85) -- (4.10,-3.85);        
\draw[flowarrow] (7.20,-3.85) -- (8.20,-3.85);        
\draw[flowarrow] (8.525,-3) -- (8.525,-2);      
\end{tikzpicture}
\caption{The case $a<b$}
\label{fig:I-flip a<b}
\end{figure}

If $a>b$, we can interchange $A,B$ and apply the same argument with index $a<r$.

If $a=b$, then we have $a=b=1$, $r=2$ since $\gcd(a,b)=1$, so $ACD, BCD$ are regular. After the first flop, the cones in (\ref{eq: I flip Sigma_2}) are regular and $V=C+D$. By the classification \ref{subsec: toric blow-down and flop} and \ref{subsec: toric mmp}, $X_{\Sigma_2}\to X_{\Sigma'}$ is a smooth curve blow-down by inserting $V$ in $CD$. Then the rest follows by Lemma \ref{lem: lift curve blow-down and pt blow-down to sing pt}.

\begin{figure}[htbp]
\centering
\begin{tikzpicture}[x=1cm,y=1cm]

\tikzset{
  flowarrow/.style={->, dashed, line width=0.5pt},
  paneltitle/.style={font=\small}
}

\begin{scope}[shift={(0.000,-4.850)}]
\coordinate (p0) at (1.32500,2.00000); 
\coordinate (p1) at (2.65000,0.00000); 
\coordinate (p2) at (0.00000,0.00000); 
\coordinate (p3) at (1.32500,-1.75000); 
\coordinate (p4) at (1.32500,-0.68000); 

\draw[fanedge] (p0)--(p1);
\draw[fanedge] (p0)--(p2);
\draw[fanedge] (p1)--(p2);
\draw[fanedge] (p1)--(p3);
\draw[fanedge] (p1)--(p4);
\draw[fanedge] (p2)--(p3);
\draw[fanedge] (p2)--(p4);
\draw[fanedge] (p3)--(p4);

\node[fandot] at (p0) {};
\node[fanlabel] at (1.32500,2.15000) {$C$};
\node[fandot] at (p1) {};
\node[fanlabel] at (2.79000,-0.12000) {$B$};
\node[fandot] at (p2) {};
\node[fanlabel] at (-0.12000,-0.12000) {$A$};
\node[fandot] at (p3) {};
\node[fanlabel] at (1.56000,-1.92000) {$D$};
\node[fandot] at (p4) {};
\node[fanlabel] at (1.32500,-0.50000) {$V$};

\node[fanlabel] at (barycentric cs:p2=1,p4=1,p3=1) {$1$};
\node[fanlabel] at (barycentric cs:p1=1,p4=1,p3=1) {$1$};

\node[paneltitle] at (1.32500,-2.35000) {$\Sigma_1$};
\end{scope}

\begin{scope}[shift={(4.100,-4.850)}]
\coordinate (p0) at (1.32500,2.00000); 
\coordinate (p1) at (2.65000,0.00000); 
\coordinate (p2) at (0.00000,0.00000); 
\coordinate (p3) at (1.32500,-1.75000); 
\coordinate (p4) at (1.32500,-0.68000); 

\draw[fanedge] (p0)--(p1);
\draw[fanedge] (p0)--(p2);
\draw[fanedge] (p0)--(p4);
\draw[fanedge] (p1)--(p3);
\draw[fanedge] (p1)--(p4);
\draw[fanedge] (p2)--(p3);
\draw[fanedge] (p2)--(p4);
\draw[fanedge] (p3)--(p4);

\node[fandot] at (p0) {};
\node[fanlabel] at (1.32500,2.15000) {$C$};
\node[fandot] at (p1) {};
\node[fanlabel] at (2.79000,-0.12000) {$B$};
\node[fandot] at (p2) {};
\node[fanlabel] at (-0.12000,-0.12000) {$A$};
\node[fandot] at (p3) {};
\node[fanlabel] at (1.56000,-1.92000) {$D$};
\node[fandot] at (p4) {};
\node[fanlabel] at (1.32500,-0.50000) {$V$};

\node[fanlabel] at (barycentric cs:p0=1,p2=1,p4=1) {$1$};
\node[fanlabel] at (barycentric cs:p0=1,p1=1,p4=1) {$1$};

\node[paneltitle] at (1.32500,-2.35000) {$\Sigma_2$};
\end{scope}

\begin{scope}[shift={(8.200,-4.850)}]
\coordinate (p0) at (1.32500,2.00000); 
\coordinate (p1) at (2.65000,0.00000); 
\coordinate (p2) at (0.00000,0.00000); 
\coordinate (p3) at (1.32500,-1.75000); 
\coordinate (p4) at (1.32500,-0.68000); 

\draw[fanedge] (p0)--(p1);
\draw[fanedge] (p0)--(p2);
\draw[fanedge] (p1)--(p3);
\draw[fanedge] (p0)--(p3);
\draw[fanedge] (p2)--(p3);

\node[fandot] at (p0) {};
\node[fanlabel] at (1.32500,2.15000) {$C$};
\node[fandot] at (p1) {};
\node[fanlabel] at (2.79000,-0.12000) {$B$};
\node[fandot] at (p2) {};
\node[fanlabel] at (-0.12000,-0.12000) {$A$};
\node[fandot] at (p3) {};
\node[fanlabel] at (1.56000,-1.92000) {$D$};

\node[fanlabel] at (barycentric cs:p0=1,p2=1,p3=1) {$1$};
\node[fanlabel] at (barycentric cs:p0=1,p1=1,p3=1) {$1$};

\node[paneltitle] at (1.32500,-2.35000) {$\Sigma'$};
\end{scope}



\draw[flowarrow] (3.10,-3.85) -- (4.10,-3.85);        
\draw[flowarrow] (7.20,-3.85) -- (8.20,-3.85);        

\end{tikzpicture}
\caption{The case $a=b$}
\label{fig:I-flip, a=b}
\end{figure}

The readers could easily show that exactly one curve blowdown and at most $r-1$ flops occur in this process.    
\end{proof}

\begin{lem}\label{lem: lift II flip}
Proposition \ref{prop: lifting mmp} holds in Case 3, Family (II) of \ref{subsec: toric mmp}.    
\end{lem}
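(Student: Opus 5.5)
The plan is to copy the strategy of Lemma~\ref{lem: lift I flip}: first insert the mld-ray of the singular cone $ABD$, then perform one smaller flip (or a smooth flop), and finally recognize what remains as a subdivision of $\Sigma'$ by an mld-ray. The whole argument runs by strong induction on $r$, simultaneously for Families I and II, so that Lemma~\ref{lem: lift I flip} and the present lemma may be applied to any flip of smaller index. Here $D=aA+B-rC$ with $ABC$ regular, and $ABD,ACD,BCD$ have indices $r,1,a$.

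\emph{Step 1 (mld-ray of $ABD$).} Since $rC=aA+B-D$, the lattice $N$ is generated by $A,B,D$ together with $C$, and the residues of $kC$ modulo $\Zz A+\Zz B+\Zz D$ have fractional coordinates $(\{ka/r\},\{k/r\},\{-k/r\})$. The sum of these is minimal exactly when $ka\equiv 1\pmod r$. Fix $0<k<r$ with this property. Then
\[
V=\tfrac1r\bigl(A+kB+(r-k)D\bigr)
\]
is the mld-ray of $ABD$. Substituting $D$ gives $V=mA+B-(r-k)C$ with $m=\frac{1+(r-k)a}{r}\in\Zz_{>0}$ and $m<r-k$. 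By Lemma~\ref{lem: first insertion}, inserting $V$ produces the cones $BDV,ADV,ABV$ of indices $1,k,r-k$. This is a first step of the canonical resolution, so $\Sigma_1^c=\Sigma^c$ by Definition~\ref{def: canonical resolution}(4).

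\emph{Step 2 (a smaller flip).} In the regular basis $(A,B,C)$, the relation $V=mA+B-(r-k)C$ is exactly of type $\mathrm{II}(r-k,m)$. Here $\gcd(m,r-k)=1$ because $rm-(r-k)a=1$. Hence $\{ABC,ABV\}\rightsquigarrow\{ACV,BCV\}$ is a terminal flip of index $r-k<r$. When $r-k=1$ this degenerates to the smooth flop $A+B=C+V$. Either way, induction lets us lift $X_{\Sigma_1}\dasharrow X_{\Sigma_2}$ to the canonical resolutions. The relevant cones of $\Sigma_2$ are $ACV,BCV,ADV,BDV$.

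\emph{Step 3 (matching with $\Sigma'$).} Writing $A=\frac1a(rC-B+D)$, one computes
\[
V=\tfrac1a C+\tfrac{ka-1}{ra}B+\tfrac{(r-k)a+1}{ra}D,
\]
whose coefficients sum to $1+\frac1a$. So $V$ is the mld-ray of $BCD\in\Sigma'$ (when $a=1$ I expect instead a smooth curve or point blow-down, to be handled separately). Inserting $V$ into $BCD$ gives the fan $\Sigma_3'$ with cones $ACD,BCV,BDV,CDV$, and $(\Sigma_3')^c=(\Sigma')^c$. It remains to connect $\Sigma_2$ and $\Sigma_3'$, which differ only across the walls $AV$ and $CD$: $\{ACV,ADV\}\rightsquigarrow\{ACD,CDV\}$. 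I would expand $D$ in a basis adapted to the regular cone $ACV$ to identify this as a flip of type I or II. That flip must have index $k$, the index of $ADV$, which is less than $r$; the case $k=1$ is a smooth flop. Induction then finishes the proof.

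\emph{Main obstacle.} The hardest part is this last identification. I must check that the $(C,D)$ versus $(A,V)$ wall-crossing really goes in the $K$-negative direction with the correct normal form, that the regular cone is $ACV$, and that the other cone has index $k$. The degenerate cases also need separate treatment: $a=1$, $k=1$, $r-k=1$, and the overlap with Family I. If the direction turns out to be reversed, I would instead swap the roles of $A$ and $B$ in choosing which mld-ray to insert.
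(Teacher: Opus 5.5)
Your route matches the paper's step for step:
\begin{itemize}
\item insert the mld-ray $V=\frac1r(A+kB+(r-k)D)$ of $ABD$;
\item flip $AB\rightsquigarrow CV$, which is type $\mathrm{II}(r-k,m)$, or a smooth flop when $r-k=1$;
\item flip $AV\rightsquigarrow CD$;
\item recognize the result as the star subdivision of $\Sigma'$ at the mld-ray $V$ of $BCD$.
\end{itemize}
The gap is that the step you call the main obstacle is left unverified, and your fallback of swapping $A$ and $B$ is not what is needed. It closes with one substitution. Using $aA+B=D+rC$ in $V=mA+B-(r-k)C$ gives
\[
V+(a-m)A=D+kC,\qquad a-m=\frac{ak-1}{r}.
\]
The basis $(A,V,C)$ is regular, since $\det(A,C,V)=\det(A,C,B)=\pm1$. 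In this basis $D=(a-m)A+V-kC$, where:
\begin{itemize}
\item $0<a-m<k$, because $1<ak<rk$ when $a>1$;
\item $\gcd(a-m,k)=1$, because $r(a-m)-ak=-1$.
\end{itemize}
Hence $\{ACV,ADV\}\rightsquigarrow\{ACD,CDV\}$ is a flip of type $\mathrm{II}(k,a-m)$. It runs in the $K$-negative direction since $(a-m)+1<k+1$, and $k<r$, so induction applies. The resulting cones $ACD,CDV,BCV,BDV$ are exactly your $\Sigma_3'$, and $V$ is the mld-ray of $BCD$ as you computed.

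Your degenerate cases also need a correction. Since $ak\equiv 1\pmod r$ with $0<a,k<r$, you have $k=1$ if and only if $a=1$. So $k=1$ is not a separate smooth-flop case. In that case $a-m=0$, so $V=C+D$ lies on the wall $CD$ and the cones $ACV,BCV,ADV,BDV$ are all regular. Then $X_{\Sigma_2}\to X_{\Sigma'}$ is the smooth curve blow-down inserting $V$ into $CD$; no point blow-down occurs. It lifts by Lemma~\ref{lem: lift curve blow-down and pt blow-down to sing pt}. Finally, your inequality in Step 1 should read $m\le r-k$, with equality only when $m=r-k=1$ (equivalently $a=r-1$). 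That is precisely the smooth-flop case you already single out in Step 2.
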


\begin{proof}
We will prove this with induction on $r$. As the first step, we want to find and insert the mld-ray $V$ of $ABD$. for this we write 
\begin{align*}
N =&\Zz A+\Zz B+\Zz D+\Zz C \\
  =&\Zz A+\Zz B+\Zz D+\Zz\frac{1}{r}(aA+B-D)\\
  =&\Zz A+\Zz B+\Zz D+\Zz\frac{1}{r}(akA+kB+(r-k)D), \ \forall \gcd(k,r)=1.   
\end{align*}
Since $\gcd(a,r)=1$, we can choose $k\in\{1,\dots,r-1\}$ such that $ak\equiv1\mod r$, then $N=\Zz A+\Zz B+\Zz D+\Zz\cdot\frac{1}{r}(A+kB+(r-k)D)$ and one sees 
\[
rV=A+kB+sD,\ \ s:=r-k
\]
Inserting $V$ gives us a subdivision $\Sigma_1$ and the relevant
\[
ABC,ABV,ADV,BDV\ \ \ \text{with indices }1,s,k,1.
\]
By Definition \ref{def: canonical resolution} again we have $\Sigma^c=\Sigma_1^c$.
To know what operations we shall apply, we also write $V$ in other coordinates $\{A,B,C\},\{A,C,D\},\{C,B,D\}$. 

Since $D=aA+B-rC$, we have
\begin{align*}
rV&=A+kB+sD=(sa+1)A+(k+s)B-srC\\
V&=pA+B-sC,\ p:=\frac{sa+1}{r}\in\Zz_{>0}
\end{align*}
It is easy to see that
\[\gcd(p,s)=1,\ \ \ \gcd(p,a)=1,  \ \ \ p\le a,s<r.\]
If $p=s$, then $a=r-1$ and $s=p=1$. If $p=a$, then $s=r-1$ and $p=a=k=1$.

Replace $B$ with $D+rC-aA$ in $V=pA+B-sC$, we get
\[
V+(a-p)A=D+(r-s)C=D+kC.
\]
Since $a-p=\frac{ar-as-1}{r}=\frac{ak-1}{r}$, we have $\gcd(a-p,k)=1$ and $0\le a-p<k$.

Replace $A$ with $rV-kB-sD$ in $D=aA+B-rC$, we get 
\begin{align*}
D&=a(rV-kB-sD)+B-rC   \\
arV&=(as+1)D+(ak-1)B+rC\\
aV&=pD+(a-p)B+C.
\end{align*}
Now the picture is clear, we first change the diagonal $AB$ to $CV$ and get
\[
\Sigma_1\rightsquigarrow \Sigma_2\ ,\ \{ABC, ABV\}\rightsquigarrow\{ACV,BCV\}
\]
as in \ref{subsec: toric blow-down and flop}. 
\begin{figure}[htbp]
\centering
\begin{tikzpicture}[x=1cm,y=1cm]

\tikzset{
  flowarrow/.style={->, dashed, line width=0.5pt},
  paneltitle/.style={font=\small}
}

\begin{scope}[shift={(0.000,0)}]
\coordinate (p0) at (1.32500,2.00000); 
\coordinate (p1) at (2.65000,0.00000); 
\coordinate (p2) at (0.00000,0.00000); 
\coordinate (p3) at (1.0000,-1.75000); 

\draw[fanedge] (p0)--(p1);
\draw[fanedge] (p0)--(p2);
\draw[fanedge] (p1)--(p2);
\draw[fanedge] (p1)--(p3);
\draw[fanedge] (p2)--(p3);

\node[fandot] at (p0) {};
\node[fanlabel] at (1.32500,2.15000) {$C$};
\node[fandot] at (p1) {};
\node[fanlabel] at (2.79000,-0.12000) {$B$};
\node[fandot] at (p2) {};
\node[fanlabel] at (-0.12000,-0.12000) {$A$};
\node[fandot] at (p3) {};
\node[fanlabel] at (1.14000,-1.92000) {$D$};

\node[fanlabel] at (barycentric cs:p0=1,p1=1,p2=1) {$1$};
\node[fanlabel] at (barycentric cs:p1=1,p2=1,p3=1) {$r$};

\node[paneltitle] at (1.32500,-2.35000) {$\Sigma$};
\end{scope}

\begin{scope}[shift={(4.100,0)}]
\coordinate (p0) at (1.32500,2.00000); 
\coordinate (p1) at (2.65000,0.00000); 
\coordinate (p2) at (0.00000,0.00000); 
\coordinate (p3) at (1.0000,-1.75000); 
\coordinate (p4) at (1.32500,-0.68000); 

\draw[fanedge] (p0)--(p1);
\draw[fanedge] (p0)--(p2);
\draw[fanedge] (p1)--(p2);
\draw[fanedge] (p1)--(p3);
\draw[fanedge] (p1)--(p4);
\draw[fanedge] (p2)--(p3);
\draw[fanedge] (p2)--(p4);
\draw[fanedge] (p3)--(p4);

\node[fandot] at (p0) {};
\node[fanlabel] at (1.32500,2.15000) {$C$};
\node[fandot] at (p1) {};
\node[fanlabel] at (2.79000,-0.12000) {$B$};
\node[fandot] at (p2) {};
\node[fanlabel] at (-0.12000,-0.12000) {$A$};
\node[fandot] at (p3) {};
\node[fanlabel] at (1.14000,-1.92000) {$D$};
\node[fandot] at (p4) {};
\node[fanlabel] at (1.32500,-0.50000) {$V$};

\node[fanlabel] at (barycentric cs:p2=1,p4=1,p3=1) {$k$};
\node[fanlabel] at (barycentric cs:p1=1,p4=1,p3=1) {$1$};
\node[fanlabel] at (barycentric cs:p2=1,p4=1,p1=1) {$s$};

\node[paneltitle] at (1.32500,-2.35000) {$\Sigma_1$};
\end{scope}

\begin{scope}[shift={(8.200,0)}]
\coordinate (p0) at (1.32500,2.00000); 
\coordinate (p1) at (2.65000,0.00000); 
\coordinate (p2) at (0.00000,0.00000); 
\coordinate (p3) at (1.0000,-1.75000); 
\coordinate (p4) at (1.32500,-0.68000); 

\draw[fanedge] (p0)--(p1);
\draw[fanedge] (p0)--(p2);
\draw[fanedge] (p0)--(p4);
\draw[fanedge] (p1)--(p3);
\draw[fanedge] (p1)--(p4);
\draw[fanedge] (p2)--(p3);
\draw[fanedge] (p2)--(p4);
\draw[fanedge] (p3)--(p4);

\node[fandot] at (p0) {};
\node[fanlabel] at (1.32500,2.15000) {$C$};
\node[fandot] at (p1) {};
\node[fanlabel] at (2.79000,-0.12000) {$B$};
\node[fandot] at (p2) {};
\node[fanlabel] at (-0.12000,-0.12000) {$A$};
\node[fandot] at (p3) {};
\node[fanlabel] at (1.14000,-1.92000) {$D$};
\node[fandot] at (p4) {};
\node[fanlabel] at (1.32500,-0.50000) {$V$};

\node[fanlabel] at (barycentric cs:p2=1,p4=1,p3=1) {$k$};
\node[fanlabel] at (barycentric cs:p1=1,p4=1,p3=1) {$1$};
\node[fanlabel] at (barycentric cs:p2=1,p4=1,p0=1) {$1$};
\node[fanlabel] at (barycentric cs:p1=1,p4=1,p0=1) {$p$};

\node[paneltitle] at (1.32500,-2.35000) {$\Sigma_2$};
\end{scope}



\draw[flowarrow] (3.10,1) -- (4.10,1);        
\draw[flowarrow] (7.20,1) -- (8.20,1);        

\end{tikzpicture}
\label{fig:II-flip }
\end{figure}

If $p<s$, the relation
$V=pA+B-sC$ implies that $X_{\Sigma_1}\dasharrow X_{\Sigma_2}$ is a flip of type $\mathrm{II}(s,p)$. Since $s<r$, by the induction we know Proposition \ref{prop: lifting mmp} holds for $X_{\Sigma_1}\dasharrow X_{\Sigma_2}$. If $p=s=1$, we have $A+B=C+V$ and $X_{\Sigma_1}\dasharrow X_{\Sigma_2}$ is a smooth $K$-flop which lifts to $X_{\Sigma_1^c}\dasharrow X_{\Sigma_2^c}$ by Definition \ref{def: canonical resolution}. Hence it suffices to show $X_{\Sigma_2^c}\dasharrow X_{(\Sigma')^c}$ has the desired decomposition. 

In either case the relevant maximal cones in $\Sigma_2$ are
\begin{equation}\label{eq: II flip sigma_2}
 ACV,BCV,ADV,BDV\ \ \ \text{with indices }1,p,k,1.
\end{equation}

If $a=p$, then $a=p=k=1$ and $V=C+D$. Every cone of
(\ref{eq: II flip sigma_2}) is regular, so $X_{\Sigma_2}\to X_{\Sigma'}$ is the smooth curve blow-down as described in \ref{subsec: toric blow-down and flop}, which lifts to their canonical resolutions $X_{\Sigma^c_2}\to X_{(\Sigma')^c}$ by Definition \ref{def: canonical resolution}. 

If $a>p$, $ACV$ is regular and the relations
\[
D=(a-p)A+V-kC\ ,\ 0<a-p<k
\]
implies that $X_{\Sigma_2}\dasharrow X_{\Sigma_3}$ is a flip of type $\mathrm{II}(k,a-p)$, where $\Sigma_2\rightsquigarrow\Sigma_3$ is given by 
\[
\{ACV,ADV\}\rightsquigarrow\{ACD,CDV\}.
\]
Since $k<r$, by induction Proposition \ref{prop: lifting mmp} holds for $X_{\Sigma_2}\dasharrow X_{\Sigma_3}$, then it suffices to show that $X_{\Sigma_3^c}=X_{(\Sigma')^c}$.

The relevant maximal cones in $\Sigma_3$ are
\[
ACD,CDV,BCV,BDV\ \ \ \text{with indices }1,a-p,p,1.
\]
The relations 
\[
aV=(a-p)B+C+pD\ ,\  a>p\ge 1
\]
imply that $V$ is precisely the mld-ray in the index $a$ singular terminal cone $BCD$.
By Definition \ref{def: canonical resolution} again we have $\Sigma^c_3=(\Sigma')^c$.

\begin{figure}[htbp]
\centering
\begin{tikzpicture}[x=1cm,y=1cm]

\tikzset{
  flowarrow/.style={->, dashed, line width=0.5pt},
  paneltitle/.style={font=\small}
}

\begin{scope}[shift={(0.000,0)}]
\coordinate (p0) at (1.32500,2.00000); 
\coordinate (p1) at (2.65000,0.00000); 
\coordinate (p2) at (0.00000,0.00000); 
\coordinate (p3) at (1.0000,-1.75000); 
\coordinate (p4) at (1.32500,-0.68000); 

\draw[fanedge] (p0)--(p1);
\draw[fanedge] (p0)--(p2);
\draw[fanedge] (p0)--(p4);
\draw[fanedge] (p1)--(p3);
\draw[fanedge] (p1)--(p4);
\draw[fanedge] (p2)--(p3);
\draw[fanedge] (p2)--(p4);
\draw[fanedge] (p3)--(p4);

\node[fandot] at (p0) {};
\node[fanlabel] at (1.32500,2.15000) {$C$};
\node[fandot] at (p1) {};
\node[fanlabel] at (2.79000,-0.12000) {$B$};
\node[fandot] at (p2) {};
\node[fanlabel] at (-0.12000,-0.12000) {$A$};
\node[fandot] at (p3) {};
\node[fanlabel] at (1.14000,-1.92000) {$D$};
\node[fandot] at (p4) {};
\node[fanlabel] at (1.32500,-0.50000) {$V$};

\node[fanlabel] at (barycentric cs:p2=1,p4=1,p3=1) {$k$};
\node[fanlabel] at (barycentric cs:p1=1,p4=1,p3=1) {$1$};
\node[fanlabel] at (barycentric cs:p2=1,p4=1,p0=1) {$1$};
\node[fanlabel] at (barycentric cs:p1=1,p4=1,p0=1) {$p$};

\node[paneltitle] at (1.32500,-2.35000) {$\Sigma_2$};
\end{scope}

\begin{scope}[shift={(4.100,0)}]
\coordinate (p0) at (1.32500,2.00000); 
\coordinate (p1) at (2.65000,0.00000); 
\coordinate (p2) at (0.00000,0.00000); 
\coordinate (p3) at (1.0000,-1.75000); 
\coordinate (p4) at (1.32500,-0.68000); 

\draw[fanedge] (p0)--(p1);
\draw[fanedge] (p0)--(p2);
\draw[fanedge] (p0)--(p3);
\draw[fanedge] (p0)--(p4);
\draw[fanedge] (p1)--(p3);
\draw[fanedge] (p1)--(p4);
\draw[fanedge] (p2)--(p3);
\draw[fanedge] (p3)--(p4);

\node[fandot] at (p0) {};
\node[fanlabel] at (1.32500,2.15000) {$C$};
\node[fandot] at (p1) {};
\node[fanlabel] at (2.79000,-0.12000) {$B$};
\node[fandot] at (p2) {};
\node[fanlabel] at (-0.12000,-0.12000) {$A$};
\node[fandot] at (p3) {};
\node[fanlabel] at (1.14000,-1.92000) {$D$};
\node[fandot] at (p4) {};
\node[fanlabel] at (1.32500,-0.50000) {$V$};

\node[fanlabel] at (barycentric cs:p2=1,p0=1,p3=1) {$1$};
\node[fanlabel] at (barycentric cs:p0=1,p4=1,p3=1) {$a-p$};
\node[fanlabel] at (barycentric cs:p3=1,p4=1,p1=1) {$1$};
\node[fanlabel] at (barycentric cs:p1=1,p4=1,p0=1) {$p$};

\node[paneltitle] at (1.32500,-2.35000) {$\Sigma_3$};
\end{scope}

\begin{scope}[shift={(8.200,0)}]
\coordinate (p0) at (1.32500,2.00000); 
\coordinate (p1) at (2.65000,0.00000); 
\coordinate (p2) at (0.00000,0.00000); 
\coordinate (p3) at (1.0000,-1.75000); 

\draw[fanedge] (p0)--(p1);
\draw[fanedge] (p0)--(p2);
\draw[fanedge] (p0)--(p3);
\draw[fanedge] (p1)--(p3);

\draw[fanedge] (p2)--(p3);

\node[fandot] at (p0) {};
\node[fanlabel] at (1.32500,2.15000) {$C$};
\node[fandot] at (p1) {};
\node[fanlabel] at (2.79000,-0.12000) {$B$};
\node[fandot] at (p2) {};
\node[fanlabel] at (-0.12000,-0.12000) {$A$};
\node[fandot] at (p3) {};
\node[fanlabel] at (1.14000,-1.92000) {$D$};

\node[fanlabel] at (barycentric cs:p2=1,p0=1,p3=1) {$1$};
\node[fanlabel] at (barycentric cs:p1=1,p0=1,p3=1) {$a$};

\node[paneltitle] at (1.32500,-2.35000) {$\Sigma'$};
\end{scope}



\draw[flowarrow] (3.10,1) -- (4.10,1);        
\draw[flowarrow] (7.20,1) -- (8.20,1);        

\end{tikzpicture}
\caption{The case $a>p$}
\label{fig:II-flip a>p }
\end{figure}

Using inductions we can also show that our constructed decomposition in Proposition \ref{prop: lifting mmp} for a flip of type $\mathrm{II}(r,a)$ has exactly $a$ flops and $r-a$ curve blow-downs.   
\end{proof}

\begin{lem}\label{lem: lift pt blow-down to smooth pt}
Proposition \ref{prop: lifting mmp} holds in Case 2 (2) of \ref{subsec: toric mmp}.      
\end{lem}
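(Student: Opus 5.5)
The plan is to argue by induction on $a+b$, where (after permuting $A,B,C$ via Lemma \ref{lem: terminal point blow-down to a smooth point}) the step $X_\Sigma\to X_{\Sigma'}$ is the star subdivision of the regular cone $ABC$ at $W=aA+bB+C$ with $a,b>0$ and $\gcd(a,b)=1$. The relevant cones of $\Sigma$ are
\[
ABW,\ AWC,\ WBC \quad\text{of indices } 1,\ b,\ a .
\]
Since $ABC$ is regular, $(\Sigma')^c$ agrees with $\Sigma'$ near $ABC$. So the task is to connect $\Sigma^c$ to the fan containing $ABC$ using smooth flops and smooth blow-downs. Everything outside $ABC$ is untouched, by Definition \ref{def: canonical resolution}(3).

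\emph{Base case.} If $a=b=1$, then $W=A+B+C$, all three cones are regular, and $X_\Sigma\to X_{\Sigma'}$ is already a smooth point blow-down. If exactly one of $a,b$ equals $1$, I expect a short direct argument in the spirit of Lemma \ref{lem: lift I flip}. Take, say, $b=1$ and $a>1$. Then $AWC$ is regular and $WBC$ has type $\frac1a(b,1,a-1)$. I would insert the mld-ray of $WBC$, flop the regular pair it creates, and observe that what remains is either a smooth curve blow-down or a flip of type $\mathrm{I}$ or $\mathrm{II}$ of smaller index. The latter is handled by Lemmas \ref{lem: lift I flip} and \ref{lem: lift II flip}.

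\emph{Inductive step.} Assume $a>b$; the case $a<b$ is symmetric, and $a=b$ forces $a=b=1$. The strategy copies the flip lemmas: first perform a step of the canonical resolution, then recognise the result as a smaller terminal MMP step. Concretely:
\begin{enumerate}
\item Insert the mld-ray $V$ of the index-$a$ cone $WBC$. This is a first step of the canonical resolution, so $\Sigma_1^c=\Sigma^c$ by Definition \ref{def: canonical resolution}(4).
\item Write $V$ in the coordinates $\{A,B,C\}$ as in the proof of Lemma \ref{lem: lift II flip}, via the generator $\frac1a(W-bB-C)$ of $N$ over $\Zz W+\Zz B+\Zz C$. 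I expect $V=a'A+b'B+C$ with $(a',b')$ a Euclid-type predecessor of $(a,b)$, so that $a'+b'<a+b$, and in addition a relation of the form $W=V+(\text{nonnegative combination of }A,B)$ on a suitable 2-face.
\item Use that relation to exhibit a short sequence of diagonal changes inside $ABC$: a smooth $K$-flop when the relevant pair is regular, otherwise a terminal flip of type $\mathrm{I}$ or $\mathrm{II}$ with index $<a$. After these changes the ray $W$ is removed by a divisorial step that is either a smooth curve blow-down or the insertion of an mld-ray into a singular cone. The first is Case 1 and the second is Case 2(1), both covered by Lemma \ref{lem: lift curve blow-down and pt blow-down to sing pt}.
\item What is left is the star subdivision of $ABC$ at $V=a'A+b'B+C$, which is covered by the induction hypothesis.
\end{enumerate}
Composing these pieces gives the required sequence from $X_{\Sigma^c}$ to $X_{(\Sigma')^c}$.

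\emph{Main obstacle.} The hard part is steps 2–3: identifying $V$ explicitly in terms of $(a,b)$ and checking that every intermediate fan is simplicial terminal, with each elementary change really of type $\mathrm{I}$, type $\mathrm{II}$, a smooth flop, or an mld-ray insertion. I would first work out small cases, such as $(a,b)=(2,1)$, $(3,1)$ and $(3,2)$, to guess the right flop order. Then I would verify the general pattern with the determinant and index computations used in Lemma \ref{lem: lift II flip}, checking terminality through the terminal lemma as in Lemma \ref{lem: terminal point blow-down to a smooth point}.
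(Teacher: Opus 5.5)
Your outline matches the paper up to step 2: induction on $a+b$, insert the mld-ray $V$ of the larger-index cone, perform a type $\mathrm{I}$ flip handled by Lemma \ref{lem: lift I flip}, and finish with the star subdivision at $V$ by induction. However, your prediction in steps 2--3 about how $W$ disappears is false, and this is exactly where the argument needs more.

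In your normalization $W=aA+bB+C$ with $a>b$, pick $q\in\{1,\dots,a-1\}$ with $-bq\equiv 1 \pmod a$. Then $aV=qW+B+(a-q)C$, so $V=qA+pB+C$ with $p=(bq+1)/a$ (hence $pa-qb=1$), and
\[
W=(a-q)A+(b-p)B+V,\qquad pW+(b-p)C=bV+A .
\]
The second relation is the type $\mathrm{I}(p,b-p)$ flip $\{AWC,CVW\}\rightsquigarrow\{ACV,AVW\}$; here $CVW$ is regular and $AWC$ has index $b$. After this flip, the cones containing $W$ are $ABW,AVW,BVW$, i.e.\ the star subdivision of the regular cone $ABV$ at $W$.

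For $b\ge 2$ one has $p<b$, so both $a-q$ and $b-p$ are positive. Thus $W$ lies in the interior of $ABV$, not on a $2$-face. Since $ABV$ is regular, $W$ is not the mld-ray of a singular cone either. Removing $W$ is therefore a weighted $(a-q,b-p,1)$-blow-up, i.e.\ Case 2 (2) again, which Lemma \ref{lem: lift curve blow-down and pt blow-down to sing pt} does not cover. Already for $(a,b)=(3,2)$ you get $V=A+B+C$ and $W=2A+B+V$, a $(2,1,1)$-blow-up.

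No reordering of diagonal changes helps. A cone $ABV$ with one interior ray $W$ admits only the star subdivision. So any divisorial step removing $W$ and landing on the star subdivision at $V$ must be this weighted blow-up.

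The missing idea is to invoke the induction hypothesis twice, which is exactly how the paper's proof is structured:
\begin{itemize}
\item once for this $(a-q,b-p,1)$-blow-up, which is legitimate because $\gcd(a-q,b-p)=1$ (from $(a-q)p-(b-p)q=1$) and $(a-q)+(b-p)<a+b$;
\item once for the final $(q,p,1)$-blow-up at $V$.
\end{itemize}

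Relatedly, your case $b=1$ is not a base case, and no flop occurs there. In that case $V=(a-1)A+B+C$ and $W=A+V$, so $W$ is removed by a smooth curve blow-down along $AV$. What remains is the $(a-1,1,1)$-blow-up at $V$, which again requires the induction hypothesis.
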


\begin{proof}
Recall that our $ABC$ is regular, let $W=u=A+aB+bC$ where $a,b>0$, $\gcd(a,b)=1$. $\Sigma$ is the star subdivision of $\Sigma'$ at $W\in ABC$ as in \ref{subsec: toric blow-down and flop}. Then $ABW,BCW,ACW$ are terminal and have indices $1,a,b$.

We use induction on $a+b$, interchanging $B,C$ so that $a\leq b$. If $a=b$, then $a=b=\gcd(a,b)=1$. Then $W=A+B+C$ and $X_\Sigma\to X_{\Sigma'}$ is the
smooth point blow-down which lifts to $X_{\Sigma^c}\to X_{(\Sigma')^c}$.

Suppose $a=1<b$. Let $V=A+(b-1)B+C$, we see
\[
bV=A+C+(b-1)W,
\]
so $V$ is the mld-ray in $ACW$. Let $\Sigma_1$ be the subdivision of $\Sigma$ by inserting $V$, then by Definition \ref{def: canonical resolution} we have $\Sigma_1^c=\Sigma^c$. Notice that $W=V+B$ and $BCV,ABV$ are regular, this implies that 
\[
\Sigma_1 \rightsquigarrow \Sigma_3\ ,\  \{AVW,ABW,BCW,CVW\}\rightsquigarrow \{BCV,ABV\}
\]
is the star subdivision such that $X_{\Sigma_1}\to X_{\Sigma_3}$ is a smooth curve blow-down as in \ref{subsec: toric blow-down and flop}, where $\Sigma_3$ is obtained from $\Sigma_1$ by deleting $W$. By Definition \ref{def: canonical resolution} again $X_{\Sigma_1}\to X_{\Sigma_3}$ lifts to a smooth curve blow-down $X_{\Sigma_1^c}\to X_{\Sigma_3^c}$. Therefore it suffices to show $X_{\Sigma_3^c}\dasharrow X_{(\Sigma')^c}$ has the desired decompositions.
\begin{figure}[htbp]
\centering
\begin{tikzpicture}

\tikzset{
  flowarrow/.style={->, dashed, line width=0.5pt},
  paneltitle/.style={font=\small}
}

\begin{scope}[shift={(0.000,-3.300)}]
\coordinate (p0) at (1.32500,2.29497); 
\coordinate (p1) at (2.65000,0.00000); 
\coordinate (p2) at (0.00000,0.00000); 
\coordinate (p4) at (1.98750,0.38255); 
\draw[fanedge] (p0)--(p1);
\draw[fanedge] (p0)--(p2);
\draw[fanedge] (p0)--(p4);
\draw[fanedge] (p1)--(p2);
\draw[fanedge] (p2)--(p4);
\draw[fanedge] (p1)--(p4);
\node[fandot] at (p0) {};
\node[fanlabel] at (1.32500,2.43497) {$C$};
\node[fandot] at (p1) {};
\node[fanlabel] at (2.76000,-0.13000) {$B$};
\node[fandot] at (p2) {};
\node[fanlabel] at (-0.11000,-0.13000) {$A$};
\node[fandot] at (p4) {};
\node[fanlabel] at (1.73000,0.59) {$W$};
\node[fanlabel] at (barycentric cs:p2=1,p4=1,p1=1) {$1$};
\node[fanlabel] at (barycentric cs:p1=1,p0=1,p4=1) {$1$};
\node[fanlabel] at (barycentric cs:p2=1,p4=1,p0=1) {$b$};

\node[paneltitle] at (1.325,-0.440) {$\Sigma$};
\end{scope}
\begin{scope}[shift={(3.800,-3.300)}]
\coordinate (p0) at (1.32500,2.29497); 
\coordinate (p1) at (2.65000,0.00000); 
\coordinate (p2) at (0.00000,0.00000); 
\coordinate (p3) at (1.32500,0.76499); 
\coordinate (p4) at (1.98750,0.38255); 
\draw[fanedge] (p0)--(p1);
\draw[fanedge] (p0)--(p2);
\draw[fanedge] (p0)--(p3);
\draw[fanedge] (p0)--(p4);
\draw[fanedge] (p1)--(p2);
\draw[fanedge] (p2)--(p4);
\draw[fanedge] (p1)--(p4);
\draw[fanedge] (p2)--(p3);
\draw[fanedge] (p3)--(p4);
\node[fandot] at (p0) {};
\node[fanlabel] at (1.32500,2.43497) {$C$};
\node[fandot] at (p1) {};
\node[fanlabel] at (2.76000,-0.13000) {$B$};
\node[fandot] at (p2) {};
\node[fanlabel] at (-0.11000,-0.13000) {$A$};
\node[fandot] at (p3) {};
\node[fanlabel] at (1.18500,0.68499) {$V$};
\node[fandot] at (p4) {};
\node[fanlabel] at (1.73000,0.59) {$W$};

\node[fanlabel] at (barycentric cs:p2=1,p4=1,p3=1) {$1$};
\node[fanlabel] at (barycentric cs:p3=1,p0=1,p4=1) {$1$};
\node[fanlabel] at (barycentric cs:p2=1,p3=1,p0=1) {$b-1$};

\node[paneltitle] at (1.325,-0.440) {$\Sigma_1$};
\end{scope}
\begin{scope}[shift={(7.600,-3.300)}]
\coordinate (p0) at (1.32500,2.29497); 
\coordinate (p1) at (2.65000,0.00000); 
\coordinate (p2) at (0.00000,0.00000); 
\coordinate (p3) at (1.32500,0.76499); 
\draw[fanedge] (p0)--(p1);
\draw[fanedge] (p0)--(p2);
\draw[fanedge] (p0)--(p3);
\draw[fanedge] (p1)--(p2);
\draw[fanedge] (p1)--(p3);
\draw[fanedge] (p2)--(p3);
\node[fandot] at (p0) {};
\node[fanlabel] at (1.32500,2.43497) {$C$};
\node[fandot] at (p1) {};
\node[fanlabel] at (2.76000,-0.13000) {$B$};
\node[fandot] at (p2) {};
\node[fanlabel] at (-0.11000,-0.13000) {$A$};
\node[fandot] at (p3) {};
\node[fanlabel] at (1.18500,0.68499) {$V$};

\node[fanlabel] at (barycentric cs:p2=1,p1=1,p3=1) {$1$};
\node[fanlabel] at (barycentric cs:p3=1,p0=1,p1=1) {$1$};
\node[fanlabel] at (barycentric cs:p2=1,p3=1,p0=1) {$b-1$};

\node[paneltitle] at (1.325,-0.440) {$\Sigma_3$};
\end{scope}

\end{tikzpicture}
\caption{The case $1=a<b$}
\end{figure}

Recall that $V=A+(b-1)B+C$, therefore $X_{\Sigma_3}\to X_{\Sigma'}$ is the weighted $(1,b-1,1)$-blowup in Case 2 (2) of \ref{subsec: toric mmp}. By induction Proposition \ref{prop: lifting mmp} holds for $X_{\Sigma_3}\to X_{\Sigma'}$ and we are done.

Now suppose $1<a<b$. Then we would like to find the mld-ray $V$ of $ACW$:
\begin{align*}
N&=\Zz A+\Zz C+\Zz W+\Zz B \\
 &=\Zz A+\Zz C+\Zz W+\Zz \frac{1}{b}(-aA-C+W)\\
 &=\Zz A+\Zz C+\Zz W+\Zz \frac{1}{b}(-aqA+(b-q)C+qW),\ \forall \gcd(q,b)=1
\end{align*}
Choose $q\in\{1,\dots,b-1\}$ with $-aq\equiv1\mod b$, then we see 
\begin{equation}\label{eq: V=A,C,W}
 bV=A+(b-q)C+qW\ ,\ \gcd(q,b)=1   
\end{equation}
Let $\Sigma_1$ be the subdivision of $\Sigma$ by inserting $V$, then we have $X_{\Sigma_1^c}=X_{\Sigma^c}$ and $CVW,AVW,ACV$ are terminal with indices $1,b-q,q$.

Replacing $A$ with $bV-(b-q)C-qW$ in $W=aA+bB+C$, we get
\[
aV+B=\frac{aq+1}{b}W+(a-\frac{aq+1}{b})C.
\]
Let $p:=\frac{aq+1}{b}<\frac{aq+a}{b}\le \frac{ab}{b}=a$, then $p<a$, $\gcd(a,p)=\gcd(q,p)=1$ and the above equation becomes
\[
pW+(a-p)C=aV+B.
\]
Changing $CW$ to $BV$, we see
\[
\Sigma_1 \rightsquigarrow\Sigma_2\ ,\ \{CVW,BCW\}\rightsquigarrow\{BCV,BVW\}
\]
describes a flip of type $\mathrm{I}(a-p,p)$ as in \ref{subsec: toric blow-down and flop} and \ref{subsec: toric mmp}.

By Lemma \ref{lem: lift I flip} we know Proposition \ref{prop: lifting mmp} holds for $X_{\Sigma_1}\dasharrow X_{\Sigma_2}$. Hence it suffices to show $X_{\Sigma_2^c}\dasharrow X_{(\Sigma')^c}$ has the desired decomposition. 

Replacing $W$ with $aA+bB+C$ in the equation (\ref{eq: V=A,C,W}) we get
\begin{equation}\label{eq: V=A,B,C}
V=\frac{aq+1}{b}A+qB+C=pA+qB+C,    
\end{equation}
in particular $ABV$ is regular. Replacing $C$ with $V-pA-qB$ in equation (\ref{eq: V=A,C,W}) we get 
\[
W=\frac{(b-q)p-1}{q}A+(b-q)B+V=(a-p)A+(b-q)B+V,
\]
where we used $\frac{(b-q)p-1}{q}=a-p$, which also implies $\gcd(a-p,b-q)=1$. Therefore it describes a weighted $(a-p,b-q,1)$-blowup $$X_{\Sigma_2}\to X_{\Sigma_3}$$ in the Case 2 (2) of \ref{subsec: toric mmp}, where $\Sigma_3$ is obtained from $\Sigma_2$ by deleting $W$, replacing $AVW,ABW,BVW$ with $ABV$. By induction Proposition \ref{prop: lifting mmp} holds for $X_{\Sigma_2}\to X_{\Sigma_3}$ so it remains to show $X_{\Sigma^c_3}\dasharrow X_{(\Sigma')^c}$ has the required decomposition.

Now we notice that the relation (\ref{eq: V=A,B,C}) and $\gcd(p,q)=1$ imply that
$X_{\Sigma_3}\to X_{\Sigma'}$ is a weighted $(p,q,1)$-blowup in the Case 2 (2) of \ref{subsec: toric mmp}, then by induction Proposition \ref{prop: lifting mmp} holds for $X_{\Sigma_3}\to X_{\Sigma'}$ and we are done.

\begin{figure}[htbp]
\centering
\begin{tikzpicture}
\begin{scope}[shift={(0.000,0.000)}]
\coordinate (p0) at (1.32500,2.29497); 
\coordinate (p1) at (2.65000,0.00000); 
\coordinate (p2) at (0.00000,0.00000); 
\coordinate (p4) at (1.62500,0.31); 

\draw[fanedge] (p0)--(p1);
\draw[fanedge] (p0)--(p2);
\draw[fanedge] (p0)--(p4);
\draw[fanedge] (p1)--(p2);
\draw[fanedge] (p1)--(p4);
\draw[fanedge] (p2)--(p4);

\node[fandot] at (p0) {};
\node[fanlabel] at (1.32500,2.43497) {$C$};
\node[fandot] at (p1) {};
\node[fanlabel] at (2.76000,-0.13000) {$B$};
\node[fandot] at (p2) {};
\node[fanlabel] at (-0.11000,-0.13000) {$A$};
\node[fandot] at (p4) {};
\node[fanlabel] at (1.400,0.4) {$W$};

\node[fanlabel] at (barycentric cs:p2=1,p4=1,p1=1) {$1$};
\node[fanlabel] at (barycentric cs:p1=1,p0=1,p4=1) {$a$};
\node[fanlabel] at (barycentric cs:p2=1,p4=1,p0=1) {$b$};

\node[paneltitle] at (1.325,-0.440) {$\Sigma$};
\end{scope}

\begin{scope}[shift={(7.600,0.000)}]
\coordinate (p0) at (1.32500,2.29497); 
\coordinate (p1) at (2.65000,0.00000); 
\coordinate (p2) at (0.00000,0.00000); 

\draw[fanedge] (p0)--(p1);
\draw[fanedge] (p0)--(p2);
\draw[fanedge] (p1)--(p2);

\node[fandot] at (p0) {};
\node[fanlabel] at (1.32500,2.43497) {$C$};
\node[fandot] at (p1) {};
\node[fanlabel] at (2.76000,-0.13000) {$B$};
\node[fandot] at (p2) {};
\node[fanlabel] at (-0.11000,-0.13000) {$A$};

\node[fanlabel] at (barycentric cs:p1=1,p0=1,p2=1) {$1$};

\node[paneltitle] at (1.325,-0.440) {$\Sigma'$};
\end{scope}
\begin{scope}[shift={(0.000,-3.300)}]
\coordinate (p0) at (1.32500,2.29497); 
\coordinate (p1) at (2.65000,0.00000); 
\coordinate (p2) at (0.00000,0.00000); 
\coordinate (p3) at (1.200,1.14748); 
\coordinate (p4) at (1.62500,0.31); 

\draw[fanedge] (p0)--(p1);
\draw[fanedge] (p0)--(p2);
\draw[fanedge] (p0)--(p3);
\draw[fanedge] (p0)--(p4);
\draw[fanedge] (p1)--(p2);
\draw[fanedge] (p1)--(p4);
\draw[fanedge] (p2)--(p3);
\draw[fanedge] (p2)--(p4);
\draw[fanedge] (p3)--(p4);

\node[fandot] at (p0) {};
\node[fanlabel] at (1.32500,2.43497) {$C$};
\node[fandot] at (p1) {};
\node[fanlabel] at (2.76000,-0.13000) {$B$};
\node[fandot] at (p2) {};
\node[fanlabel] at (-0.11000,-0.13000) {$A$};
\node[fandot] at (p3) {};
\node[fanlabel] at (1.200,0.9) {$V$};
\node[fandot] at (p4) {};
\node[fanlabel] at (1.400,0.4) {$W$};

\node[fanlabel] at (barycentric cs:p2=1.7,p4=1,p3=1) {$b-q$};
\node[fanlabel] at (barycentric cs:p3=1,p0=1,p4=1) {$1$};
\node[fanlabel] at (barycentric cs:p2=1,p3=1,p0=1) {$q$};

\node[paneltitle] at (1.325,-0.440) {$\Sigma_1$};
\end{scope}
\begin{scope}[shift={(3.800,-3.300)}]
\coordinate (p0) at (1.32500,2.29497); 
\coordinate (p1) at (2.65000,0.00000); 
\coordinate (p2) at (0.00000,0.00000); 
\coordinate (p3) at (1.200,1.14748); 
\coordinate (p4) at (1.62500,0.31); 

\draw[fanedge] (p0)--(p1);
\draw[fanedge] (p0)--(p2);
\draw[fanedge] (p0)--(p3);
\draw[fanedge] (p1)--(p2);
\draw[fanedge] (p1)--(p3);
\draw[fanedge] (p1)--(p4);
\draw[fanedge] (p2)--(p3);
\draw[fanedge] (p2)--(p4);
\draw[fanedge] (p3)--(p4);

\node[fandot] at (p0) {};
\node[fanlabel] at (1.32500,2.43497) {$C$};
\node[fandot] at (p1) {};
\node[fanlabel] at (2.76000,-0.13000) {$B$};
\node[fandot] at (p2) {};
\node[fanlabel] at (-0.11000,-0.13000) {$A$};
\node[fandot] at (p3) {};
\node[fanlabel] at (1.200,0.9) {$V$};
\node[fandot] at (p4) {};
\node[fanlabel] at (1.300,0.4) {$W$};

\node[fanlabel] at (barycentric cs:p1=1,p4=1,p3=1) {$a-p$};
\node[fanlabel] at (barycentric cs:p3=1,p0=1,p1=1) {$p$};

\node[paneltitle] at (1.325,-0.440) {$\Sigma_2$};
\end{scope}
\begin{scope}[shift={(7.600,-3.300)}]
\coordinate (p0) at (1.32500,2.29497); 
\coordinate (p1) at (2.65000,0.00000); 
\coordinate (p2) at (0.00000,0.00000); 
\coordinate (p3) at (1.200,1.14748); 

\draw[fanedge] (p0)--(p1);
\draw[fanedge] (p0)--(p2);
\draw[fanedge] (p0)--(p3);
\draw[fanedge] (p1)--(p2);
\draw[fanedge] (p1)--(p3);
\draw[fanedge] (p2)--(p3);

\node[fandot] at (p0) {};
\node[fanlabel] at (1.32500,2.43497) {$C$};
\node[fandot] at (p1) {};
\node[fanlabel] at (2.76000,-0.13000) {$B$};
\node[fandot] at (p2) {};
\node[fanlabel] at (-0.11000,-0.13000) {$A$};
\node[fandot] at (p3) {};
\node[fanlabel] at (1.200,0.9) {$V$};

\node[fanlabel] at (barycentric cs:p1=1,p2=1,p3=1) {$1$};
\node[fanlabel] at (barycentric cs:p3=1,p0=1,p1=1) {$p$};
\node[fanlabel] at (barycentric cs:p2=1,p3=1,p0=1) {$q$};

\node[paneltitle] at (1.325,-0.440) {$\Sigma_3$};
\end{scope}
\end{tikzpicture}
\caption{The case $1<a<b$}
\label{fig: weighted lift}
\end{figure}
It is easy to check that the number of blow-downs in the decomposion of $X_{\Sigma^c}\dasharrow X_{(\Sigma')^c}$ is $G(\Sigma)+1=a+b-1$. It is interesting to estimate how many flops are needed and we leave it to the readers.
\end{proof}

\section{Projectiveness of intermediate models}

Actually we can show that the intermediate models in Proposition \ref{prop: lifting mmp} are relatively projective over the base of the extremal contractions. This is somehow surprising since in general smooth $K$-flops could destroy projectiveness (like many examples constructed using Atiyah flops).

\begin{prop}\label{prop: rel proj of models}
Let $X_{\Sigma}\dasharrow X_{\Sigma'}$ be as in Proposition \ref{prop: lifting mmp} and $X_{\Sigma}\to X_{\hat\Sigma}$ be the extremal contraction (either the flipping contraction or the divisorial contraction). Then we may assume the $X_i$ are projective over $X_{\hat\Sigma}$.  
\end{prop}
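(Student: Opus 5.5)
We prove the statement by following the recursive construction of Section \ref{sec: sm lifts} and checking projectivity over $X_{\hat\Sigma}$ at every intermediate model, using the same induction (on $r$ for flips, on $a+b$ for weighted blow-ups) as Lemmas \ref{lem: lift I flip}, \ref{lem: lift II flip}, \ref{lem: lift pt blow-down to smooth pt}. Everything is local over $X_{\hat\Sigma}$: the support of the relevant fans is the single cone $\hat\sigma$ (either $\Omega=\cone{A,B,C,D}$, or the cone $\sigma$ or the wall-neighbourhood of $AB$ for divisorial contractions). So it suffices to show each intermediate fan, restricted over $\hat\sigma$, is the normal fan of a polyhedron. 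Equivalently, it must admit a strictly convex piecewise linear support function, i.e. be a regular (coherent) subdivision of $\hat\sigma$.

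The key tool is the following observation. If $\Sigma_1$ is projective over $X_{\hat\Sigma}$ with strictly convex $\psi_1$, and $\Sigma_1\dasharrow\Sigma_2$ is a step that is itself an extremal operation relative to some coarser fan $\hat\Sigma_1$ refining $\hat\sigma$, then $\Sigma_2$ is again projective over $X_{\hat\Sigma}$. The steps in question are a star subdivision or its inverse, or a flip or flop across a single wall. To see this, write $\psi_2=\psi_1+\epsilon\phi$, where $\phi$ is a support function that is strictly convex across the new walls and linear on the cones of $\hat\Sigma_1$. For small $\epsilon$, convexity on the unchanged walls persists. Relative projectivity is thus preserved under any single wall-crossing or star subdivision whose exceptional locus is contracted over $X_{\hat\Sigma}$. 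Star subdivisions at a ray always preserve coherence, by perturbing by the pull-back of a divisor that is negative on the new ray. Contractions need a separate argument: $\Sigma_3$ (deleting $W$ or $V$) is coarser, and I would check that its cones are unions of linearity domains of a convex function, e.g. $\psi_1$ with the bend along the removed walls set to zero.

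Concretely, I would verify in each recursion that every intermediate fan is one of three kinds:
\begin{itemize}
\item a star subdivision of a fan already known to be projective over $X_{\hat\Sigma}$ (the mld insertions, and the canonical resolution, which is an iterated star subdivision and hence coherent);
\item obtained from $\Sigma$ or $\Sigma'$ by a single-wall change inside $\Omega$;
\item $\Sigma'$ refined by star subdivisions.
\end{itemize}
For example, in Family I the fans $\Sigma_1,\Sigma_2,\Sigma_3$ all subdivide $\Omega$ using only the rays $A,B,C,D,V$. Any triangulation of a 3-dimensional cone with 5 rays (a 2D point configuration of 5 points) is regular, so all of these are coherent. The same holds in Family II and Case 2(2), where at most two extra rays $V,W$ appear in a simplex. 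For the recursive substeps, the induction hypothesis gives projectivity over the base of the smaller extremal contraction $X_{\hat\Sigma_i}$. Since $X_{\hat\Sigma_i}\to X_{\hat\Sigma}$ is projective (being coherent as above), composition gives projectivity over $X_{\hat\Sigma}$. Finally, lifting to canonical resolutions preserves this, because star subdivisions of projective fans are projective.

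The main obstacle is the claim that the recursive bases $\hat\Sigma_i$ and the fans $\Sigma_i^c$ stay coherent over $\hat\sigma$, since in Case 2(2) nested recursions introduce many rays. I would first try to exhibit an explicit convex function. Take $\psi(v)=-\mathrm{discrepancy}$-type, i.e. the function minus $|v|$ computed conewise: the MMP is a $K$-negative process, so $-K$ relative to $\hat\sigma$ is a natural candidate convex function at every step. The fallback, if that fails, is to verify the 2D triangulations case by case, which is feasible because each step involves at most two new rays.
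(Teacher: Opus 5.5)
Your architecture is the paper's: work locally over $\hat\sigma$, follow the recursion of Lemmas \ref{lem: lift I flip}, \ref{lem: lift II flip} and \ref{lem: lift pt blow-down to smooth pt}, use induction to get projectivity over the base $X_{\hat\Sigma_i}$ of each recursive extremal contraction, compose, and pass to canonical resolutions as iterated star subdivisions. The gap is at the only non-formal step: showing that each such base $X_{\hat\Sigma_i}$ is projective over $X_{\hat\Sigma}$.

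Your ``key tool'' is meant to make this automatic, but it is false. The function $\psi_1$ is linear on the cones of $\Sigma_1$, not of $\Sigma_2$, since it bends along the wall the flip removes. So $\psi_1+\epsilon\phi$ is not a support function on $\Sigma_2$. The correct perturbation is $\hat\psi+\epsilon\phi$ with $\hat\psi$ strictly convex on the coarse fan $\hat\Sigma_1$, which presupposes what is to be shown. Being extremal over $\hat\Sigma_1$ is not the same as $\hat\Sigma_1$ being projective over $\hat\sigma$. In general a wall-crossing over $X_{\hat\Sigma}$ can destroy projectivity, as the paper remarks at the start of this section. Combinatorially, the flip graph of a planar configuration is connected, so some non-regular triangulation (e.g.\ of the six-point ``mother of all examples'') is one flip from a regular one. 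Your fallback via $-K$ also fails: after a flip $K$ is relatively ample, and on models joined by flops $K$ is trivial on the flopping curve.

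Nor does ``coherent as above'' cover the bases. The fact you invoke, that every triangulation of five points in the plane is regular, handles $\Sigma_1,\Sigma_2,\Sigma_3$. But the bases are non-simplicial subdivisions:
\begin{itemize}
\item Family I: $\{ADV,BDV,\cone{A,B,C,V}\}$ and $\{ACV,ADV,\cone{B,C,D,V}\}$;
\item Family II: $\{ADV,BDV,\cone{A,B,C,V}\}$ and $\{BCV,BDV,\cone{A,C,D,V}\}$;
\item Case 2(2): $\{ACV,AVW,ABW,\cone{B,C,V,W}\}$.
\end{itemize}
This is exactly the list the paper reduces to, and each fan must be checked directly, which is short.

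For $\{ACV,ADV,\cone{B,C,D,V}\}$ (with $b>a$), take the piecewise linear function with value $0$ on $B,C,D,V$ and $1$ on $A$; the zero values are consistent with the relation $bC+D=(b-a)B+aV$. Let $\ell_{ACV},\ell_{ADV}$ be the linear functions agreeing with it on those cones. Then the function equals $\max(0,\ell_{ACV},\ell_{ADV})$ and bends strictly across $CV$, $DV$ and $AV$. The reason is that $B$ and $D$ lie on the side of $\mathrm{span}(C,V)$ opposite to $A$: the cones $ACV,BCV$ are adjacent in $\Sigma_2$, and $CV$ is a face of the convex cone $\cone{B,C,D,V}$. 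The same holds symmetrically for $\mathrm{span}(D,V)$.

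The same recipe works for the other four fans, up to the sign convention for convexity: put $0$ on the rays of the quadrilateral cell and $1$ on the remaining ray. The needed sign conditions are read off from adjacencies in the fan just before the contraction.

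With this in place the rest of your argument goes through. In particular, the ``main obstacle'' you name is not one: nested recursions are handled by induction relative to their own bases, so only these five-ray fans ever need to be compared with $\hat\sigma$.
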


\begin{proof}
Case 1 and Case 2 (1) of \ref{subsec: terminal fans} is obvious. 
Since (relative) projectiveness is transitive and all the local $\Qq$-Cartier divisors could extend, we just need to check the relative projectiveness locally for each case in \ref{subsec: toric mmp}, i.e. we may assume 
\begin{itemize}
    \item $\hat\Sigma=\cone{A,B,C,D}$ in Case 3 of \ref{subsec: toric mmp}.
    \item $\hat\Sigma=ABC$ (and its faces) in Case 2 (2) of \ref{subsec: toric mmp}.
\end{itemize}
The relative projectiveness is equivalent to showing the fan admit strictly convex support functions (linear on each cone), see \cite[Section 6.1]{CLS10} for details.
We will use the figures in Section \ref{sec: sm lifts} to illustrate, and by induction and some standard reduction it suffices to prove it for the following fans:
\begin{itemize}
    \item In the proof of Lemma \ref{lem: lift I flip},
    \[
    \{ADV,BDV,\cone{A,B,C,V}\}\ ;\ \{ACV,ADV,\cone{B,C,D,V}\}
    \]
    \item In the proof of Lemma \ref{lem: lift II flip},
    \[
    \{ADV,BDV,\cone{A,B,C,V}\}\ ;\ \{BCV,BDV,\cone{A,C,D,V}\}
    \]
    \item In the proof of Lemma \ref{lem: lift pt blow-down to smooth pt},
    \[
    \{ACV,AVW,ABW,\cone{B,C,V,W}\}
    \]
\end{itemize}
The existence of the strictly convex support functions can be easily checked for those fans since there are few rays to create any obstruction, we leave the detail to readers for exercise.
\end{proof}

\begin{proof}[Proof of Theorem \ref{thm: decomp bir morph btw toric 3-folds}]
As mentioned in \ref{subsec: toric mmp}, we can run a toric $K_X$-MMP over $Y$ such that it terminates with $Y$, which preserves terminality. Then the decomposition part is given by Proposition \ref{prop: lifting mmp} since the canonical resolutions of $X$ and $Y$ are themselves by Definition \ref{def: canonical resolution}. The projectiveness claim then follows from Proposition \ref{prop: rel proj of models}.    
\end{proof}

\section{Counter-example}

The following example shows that Theorem \ref{thm: decomp bir morph btw toric 3-folds} fails without projectiveness, although the elementary toric $K$-flops and smooth blow-downs could be translated into pure and clean combinatoric languages. Therefore one shall not expect a direct combinatoric proof in the toric setting.

\begin{ex}\label{ex: counter-ex when not proj}
 Consider the ten primitive rays $v_i$
\begin{equation*}
\begin{array}{c|ccccc}
 i&1&2&3&4&5\\\hline
 v_i=(v_{i1},v_{i2},v_{i3})&(1,0,0)&(0,1,0)&(0,0,1)&(1,1,1)&(1,1,2)\\[2pt]
 &6&7&8&9&0\\\hline
 &(1,2,1)&(2,1,1)&(1,1,3)&(2,1,2)&(3,1,2)
\end{array}
\end{equation*}
Let $\Sigma$ be the fan containing the following fifteen maximal cones and all their faces:
\begin{equation*}
\begin{gathered}
127,138,170,180,236,260,270,368,\\
456,459,460,490,568,589,890.
\end{gathered}
\end{equation*}
The indices triple $ijk$ denotes the cone $\cone{v_i,v_j,v_k}$. Using the plane $\{x+y+z=1\}$, we could illustrate $\Sigma$ as the triangulation

\begin{figure}[htbp]
\centering
\begin{tikzpicture}
\begin{scope}[shift={(0.000,0.000)}]
\coordinate (p0) at (2.80000,4.84974);
\coordinate (p1) at (5.60000,0.00000);
\coordinate (p2) at (0.00000,0.00000);
\coordinate (p3) at (2.80000,1.61658);
\coordinate (p4) at (2.80000,2.42487);
\coordinate (p5) at (2.80000,2.90985);
\coordinate (p6) at (3.50000,1.21244);
\coordinate (p7) at (2.10000,1.21244);
\coordinate (p8) at (2.24000,1.93990);
\coordinate (p9) at (1.86667,1.61658);
\draw[fanedge] (p0)--(p1);
\draw[fanedge] (p0)--(p2);
\draw[fanedge] (p0)--(p5);
\draw[fanedge] (p0)--(p6);
\draw[fanedge] (p1)--(p2);
\draw[fanedge] (p1)--(p6);
\draw[fanedge] (p1)--(p7);
\draw[fanedge] (p1)--(p9);
\draw[fanedge] (p2)--(p5);
\draw[fanedge] (p2)--(p7);
\draw[fanedge] (p2)--(p9);
\draw[fanedge] (p3)--(p4);
\draw[fanedge] (p3)--(p6);
\draw[fanedge] (p3)--(p8);
\draw[fanedge] (p3)--(p9);
\draw[fanedge] (p4)--(p5);
\draw[fanedge] (p4)--(p6);
\draw[fanedge] (p4)--(p8);
\draw[fanedge] (p5)--(p6);
\draw[fanedge] (p5)--(p8);
\draw[fanedge] (p5)--(p9);
\draw[fanedge] (p6)--(p9);
\draw[fanedge] (p7)--(p9);
\draw[fanedge] (p8)--(p9);
\node[fandot] at (p0) {};
\node[fanlabel] at (2.80000,5.00974) {$3$};
\node[fandot] at (p1) {};
\node[fanlabel] at (5.72000,-0.13000) {$2$};
\node[fandot] at (p2) {};
\node[fanlabel] at (-0.12000,-0.13000) {$1$};
\node[fandot] at (p3) {};
\node[fanlabel] at (2.95000,1.65658) {$4$};
\node[fandot] at (p4) {};
\node[fanlabel] at (2.95000,2.43487) {$5$};
\node[fandot] at (p5) {};
\node[fanlabel] at (2.94000,2.99985) {$8$};
\node[fandot] at (p6) {};
\node[fanlabel] at (3.65000,1.17244) {$6$};
\node[fandot] at (p7) {};
\node[fanlabel] at (1.95000,1.15244) {$7$};
\node[fandot] at (p8) {};
\node[fanlabel] at (2.10000,1.99990) {$9$};
\node[fandot] at (p9) {};
\node[fanlabel] at (1.69667,1.63658) {$0$};
\end{scope}
\end{tikzpicture}
\caption{The ray points are
$v_i/(v_{i1}+v_{i2}+v_{i3})$ in the affine section plane.}
\label{fig:counter}
\end{figure}  
A direct determinant computations show that every $\sigma\in \Sigma$ is regular.  Consequently $V=X_\Sigma$ is smooth and $V\to\mathbb{A}^3$ is a proper birational morphism since $\Sigma$ is a subdivision of the positive octant. However, as an exercise, the readers can check that
\begin{enumerate}
    \item There is no flopping toric contraction $V\to Z$.
    \item There is no ordinary smooth point or curve blow-down $V\to Z$.
    \item The morphism $V\to\mathbb{A}^3$ is not projective.    
\end{enumerate}
The first two can be done explicitly but (3) is a little bit harder, one needs to deals with the inequalities (ensured by ampleness) between the coefficients of a toric invariant divisor $\sum_id_iD(v_i)$. The readers are also encouraged to think about developing the algorithm to exhaust all the possible fans with rays $v_i\in\Zz^3_{\ge 0}$ and $|v_{i1}+v_{i2}+v_{i3}|\le n$, see how the amount of calculation increases with respect to $n$.
\end{ex}

\noindent\textbf{Acknowledgement and AI disclosure}.

The author first thought about this question around his second year of Ph.D, when he was doing the exercises in Koll\'ar's paper \cite{Kol08}. The author wanted to test the question on toric varieties but it turned out to be subtle, the author managed to write some examples by hands, showing that sufficiently many flops are needed before a smooth blow-down, but was far away from a positive proof or a counter-example. The author would like to thank his advisor Christopher Hacon, for writing an email to Prof. Koll\'ar on whether there is any progress for this question (even for the toric version), the answer was no. He also tried to write a computer program to find a counter-example but he is not good at coding, realizing the amount of calculations grows so rapidly, he considered this way hopeless. He also wants to thank Jingjun Han, Chen Jiang and Felipe Castellano Mac\'ias for some useful discussions.

For some obvious reason, the author becomes interested in this question again just recently. He feels AI should be good at this kind of question and is curious how/if AI could solve it. However, no result was made and no counter-example was found in the past two months, probably because the author can only afford Chat GPT Plus plan, it stopped every (approximately) 30 minutes. Hence he asked his friends who have the Pro plan to try, still no result was made, he was told that several 4h-5h attempts were carried out, even after the new Astra version was released. After that the author is convinced this is a challenging problem that worth sitting down and trying to make a difference as a human being. The author started to read and digest the partial answers carefully, from which he benefits a lot, he also realizes where the essential difficulties are from the interactive communications. 

The turning point comes when the AI mentions a paper \cite{Ch24} in an attempt to solve the general question. The paper is about some weak version minimal resolutions and factorizations for threefolds, it is not enough for our request. But the statements there remind the author that terminal quotient singularity (3-fold) has a canonical resolution, he immediately realizes that this might be enough to solve the toric version by using MMP as explained in the strategy below Theorem \ref{thm: decomp bir morph btw toric 3-folds}, because according to the author's experience in constructing terminal examples (\cite[Theorem 5.7]{LX23}), it is possible to classify the toric terminal blow-downs and flips completely and the lifting process is extremely promising. Therefore the author asks the AI to check if this strategy works and the affirmation is given within 12 minutes. This is surprising since so many previous attempts were made for the toric version as well. It turns out that the author has been asking the wrong question for toric varieties! \emph{The flop is a very simple operation in the toric language, but it loses projectiveness very easily, for induction reason, the author considers proper toric varieties instead of projective ones, which seems more compatible if we want to prove it combinatorially.} However the MMP argument only works for projective varieties. 

So how about the proper case? The author asked the AI to check the toric statement on examples in previous conversations, as expected the computations explode when the height of the rays grows,
\[
h(v)=|v_1|+|v_2|+|v_3|\ ,\ v=(v_1,v_2,v_3)\in \Zz^3_{\ge 0}.
\]
The AI has done over 3 million enumerations when $h\le 4$ but no counterexample was found. Knowing that properness may not be enough, the author push it further several times and AI found the first counter-example \ref{ex: counter-ex when not proj} where $h=6$ and number of rays is 10. This also surprised me, considering the AI missed it in the past few months. It was at this point that the author decided to write this paper.

The author do not want to claim any traditional credit for this paper, it is more like the AI brings the ideas to me than I bring the ideas to it. The author writes the whole paper including all the proofs on his own (hard to tell if the AI could write it better). The author did use the AI to help generate some graphs, especially Example \ref{ex: counter-ex when not proj}, but most of them need considerable adjustment and it was truly painful.

The purpose of this paper is more about understanding and illustrating, rather than a mark that certain question is solved. I hope people will be motivated and benefit from reading this paper.

\end{document}